\numberwithin{equation}{section}
\newtheorem{Definition}{Definition}[section]
\newtheorem{definition}[Definition]{Definition}
\newtheorem{theorem}[Definition]{Theorem}
\newtheorem{lemma}[Definition]{Lemma}
\newtheorem{corollary}[Definition]{Corollary}
\begin{document}

\title{\Large \bf On the spectrum of linear dependence graph of finite dimensional vector spaces}
\author{\bf A. K. Bhuniya and Sushobhan Maity}
\date{}

\maketitle
\begin{center}
Department of Mathematics, Visva-Bharati, Santiniketan-731235, India. \\
anjankbhuniya@gmail.com, susbhnmaity@gmail.com
\end{center}

\begin{abstract}{\footnotesize }
In this paper, we introduce a graph structure called linear dependence graph of a finite dimensional vector space over a finite field. Some basic properties of the graph like connectedness, completeness, planarity, clique number, chromatic number etc. have been studied. It is shown that two vector spaces are isomorphic if and only if their corresponding linear dependence graphs are isomorphic. Also adjacency spectrum, Laplacian spectrum and distance spectrum of the linear dependence graph have been studied.
\end{abstract}
{\it Key Words and phrases:} \  Graph; linear dependence; Laplacian; distance; spectrum
\\{\it 2010 Mathematics subject Classification:} 05C25; 05C50; 05C69

\section{Introduction}
\pagenumbering{arabic}

There has been a great deal of interest in the last three decades in characterizing different algebraic structures in terms of properties of graphs associated with themselves. There are different formulation on associating a graph with some algebraic structure. Various algebraic structures like semigrups \cite{CGS}, groups \cite{BS,CS}, rings \cite{Beck,CGMS}, etc. have been characterized by assigning graph structures on themselves. Very recently, a series of papers on assigning a graph to a vector space \cite{Das,Das1,Das2,Das3} have been published.

In this paper we define a graph structure on a finite dimensional vector space $V$ over a finite field $F$, called linear dependence graph of the vector space $V$ and study some of its properties. 

In Section 3, we study some basic properties like connectedness, completeness, planarity etc. In Section 4, we study special properties like connectivity, energy etc. by finding out the spectrum of the adjacency matrix, the Laplacian matrix and the distance matrix of the graph.

\section{Definition and Preliminaries}

Let $G=(V,E)$ be a graph. Throughout this paper, every graph is simple. If every pair of distinct vertices of $G$ are adjacent, then it is called complete. A subset $I$ of $V$ is said to be independent if no two elements of $I$ are adjacent. The number of edges of a graph is called the size of $G$. The maximum number of elements of an independent set is called the independence number of $G$. A subset $D$ of $V$ is called dominating if each element of $V\setminus D$ is adjacent to at least one element of $D$. If no proper subset of $D$ is a dominating set for $G$, then $D$ is called a minimal dominating set for $G$. The least cardinality of a dominating set is called the domination number of $G$. A clique is a complete subgraph of $G$. The largest number of a clique is called the clique number of $G$, denoted as $\omega(G)$. The chromatic number of $G$, written as $\chi(G)$, is the minimum number of colours needed for labeling the vertices so that adjacent vertices get different colours. Two graphs $G=(V,E)$ and $G'=(V',E')$ are said to be isomorphic if there exists a bijective mapping $\phi : V \rightarrow V'$ such that $a \thicksim b$ in $G$ if and only if $\phi(a) \thicksim \phi(b)$ in $G'$. A path of length $k$ in a graph $G$ is an alternating sequence of vertices and edges $v_0e_0v_1e_1v_2\cdots v_{k-1}e_{k-1}v_k$, where $v_i$'s are distinct (except possibly $v_0,v_k$) and $e_i$ is the edge joining $v_i$ and $v_{i+1}$. If there exists a path between any pair of distinct vertices, then it is called connected. The distance $d(u,v)$ of two vertices $u,v \in V$, is defined as the length of the shortest path between $u$ and $v$. The diameter of a graph $G$ is defined as $diam(G)=max_{u,v \in V} d(u,v)$, if it exists. Otherwise, $diam(G)$ is defined as $\infty$. A cycle is a path with first and last vertices same. A graph is said to be Eulerian if it contains a cycle consisting of all the edges of $G$ exactly once.

For any graph $G$, let $A(G)$, $D(G)$, $\mathbb{D}(G)$ be the adjacency matrix, diagonal matrix of vertex degrees and the distance matrix respectively. Then the Laplacian matrix of $G$ is defined as $L(G)=D(G)-A(G)$. Interestingly all of these matrices are symmetric. Among them the Laplacian matrix is positive semidefinite and singular with $0$ as the smallest eigenvalue. The eigen values of the adjacency matrix, the Laplacian matrix and the distance matrix are known as the adjacency spectrum, the Laplacian spectrum and the distance spectrum of the graph $G$ respectively. These spectrums plays an important role in the study of many graph theoretic properties like connectivity, colouring, energy of a graph, number of spanning trees etc.

For any square matrix $B$, we denote the characteristic polynomial $det(xI-B)$ of $B$ by $\Theta (B,x)$. For the vertices $\{v_1,v_2, \cdots ,v_n\}$ in $G$, $A_{v_1,v_2, \cdots, v_k}(G)$ is defined as the principal submatrix of $A(G)$ formed by deleting the rows and columns corresponding to the vertices $v_1,v_2, \cdots , v_k$. In particular if $k=|G|$, then for convention it has been taken that $\Theta(A_{v_1,v_2, \cdots ,v_{|G|}}(G),x)=1$.

\begin{definition}
Let $V$ be a finite dimensional vector space over a finite field $F$. Define a graph $\Gamma (V)=(V,E)$, where $V$ is the vector space and $(a,b) \in E$ if and only if $a$ and $b$ are distinct and they are linearly dependent.
\end{definition}

Thus the null vector $\theta$ is adjacent to every other vertices and two non-null vectors $a$ and $b$ are adjacent if and only if $b=\lambda a$ for some $\lambda \in F \setminus \{1\}$.

\begin{figure}[h]
\includegraphics[width=2.0in]{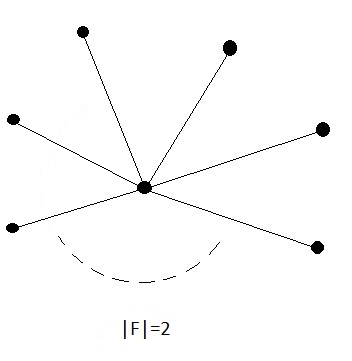} 
\includegraphics[width=2.6in]{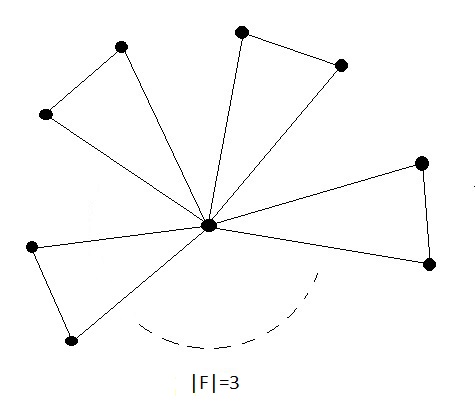}
\includegraphics[width=2.7in]{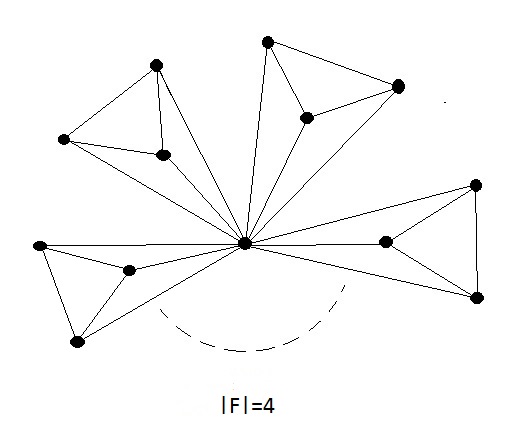}
\caption{Examples in case of $|F|=2,3,4.$}
\end{figure}

Throughout this paper we assume that  $dim(V)=n$ and $|F|=q$.
\section{Basic properties of $\Gamma(V)$}

In this section, we characterize some basic properties of $\Gamma(V)$ like connectedness, completeness, clique number, chromatic number, planarity etc. 

\begin{theorem}    \label{completenes}
$\Gamma (V)$ is complete if and only if $n=1$.
\end{theorem}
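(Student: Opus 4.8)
The plan is to argue the two implications separately, each of which reduces to the elementary fact that any two vectors in a one-dimensional space are linearly dependent, while a space of dimension at least two always contains two linearly independent vectors.

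First I would handle the ``if'' direction. Assume $n=1$. Then $|V|=q\geq 2$, so $\Gamma(V)$ has at least two vertices. Take any two distinct vertices $a,b\in V$. Since $\dim V=1$, the set $\{a,b\}$ is linearly dependent (two vectors in a one-dimensional space can never be independent), so by definition $a$ and $b$ are adjacent in $\Gamma(V)$. As this holds for every pair of distinct vertices, $\Gamma(V)$ is complete.

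For the ``only if'' direction I would argue the contrapositive: suppose $n\geq 2$. Choose a basis $\{v_1,v_2,\dots,v_n\}$ of $V$; then $v_1$ and $v_2$ are distinct vectors that are linearly independent, hence they are not adjacent in $\Gamma(V)$ (neither is a scalar multiple of the other, and in particular neither is $\theta$). Therefore $\Gamma(V)$ is not complete. Combining the two implications gives the statement.

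I do not expect a genuine obstacle here; the only point requiring a moment's care is the boundary case $n=1$, where one should note that $\Gamma(V)$ is nonempty with $\geq 2$ vertices (so that ``complete'' is meaningful) and that the null vector $\theta$, being adjacent to everything, causes no exception. Everything else follows directly from the definition of $\Gamma(V)$ and the definition of linear (in)dependence.
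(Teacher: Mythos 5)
Your proof is correct and essentially matches the paper's: both rest on the observation that completeness forces every vector to be a scalar multiple of any fixed nonzero vector, which is exactly the statement that $\dim V=1$. The only cosmetic difference is that you argue the ``only if'' direction by contrapositive (exhibiting two independent basis vectors when $n\geq 2$), whereas the paper argues it directly by showing $V=\langle a\rangle$; these are the same argument read in opposite directions.
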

\begin{proof}
Let $a\in V$ be a non-zero vector. Since $\Gamma(V)$ is complete for every $b \in V$, $b=\lambda a$ for some $\lambda \in F$. Thus $V=<a>=\{\lambda a: \lambda \in F\}$, showing $dim(V)=1$. 

Converse is trivial.
\end{proof}

\begin{theorem}    \label{size}
The size $m$ of $\Gamma(V)$ is $\frac{q(q^n-1)}{2}$.
\end{theorem}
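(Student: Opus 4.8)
The plan is to decompose $\Gamma(V)$ into overlapping complete subgraphs indexed by the one-dimensional subspaces of $V$, and then count edges clique by clique. First I would record the basic observation: distinct vertices $a,b$ are adjacent precisely when they lie, together with $\theta$, in a common one-dimensional subspace of $V$. Indeed, if $a,b$ are linearly dependent and $a\neq\theta$, then $b=\lambda a$ for some $\lambda\in F$, so $a,b\in <a>$; the degenerate cases where one of the two vectors is $\theta$ are covered as well, since $\theta$ lies in every subspace. Conversely, any two distinct vectors lying in a one-dimensional subspace are linearly dependent, hence adjacent by the definition of $\Gamma(V)$. Consequently, for each one-dimensional subspace $W$ of $V$, the subgraph induced on the $q$ vectors of $W$ is the complete graph on $q$ vertices, contributing $\binom{q}{2}=\frac{q(q-1)}{2}$ edges.

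Next I would count the one-dimensional subspaces. Every non-zero vector of $V$ lies in exactly one one-dimensional subspace, and each one-dimensional subspace contains exactly $q-1$ non-zero vectors; hence $V$ has exactly $\frac{q^n-1}{q-1}$ one-dimensional subspaces. The crucial combinatorial point is that the cliques obtained above cover each edge of $\Gamma(V)$ exactly once. By the first paragraph every edge belongs to at least one such clique. For uniqueness: two distinct one-dimensional subspaces intersect only in $\theta$, so they have at most one vertex in common and therefore share no edge. Thus the edge sets of these $\frac{q^n-1}{q-1}$ complete subgraphs partition $E$, and summing gives
$m=\frac{q^n-1}{q-1}\binom{q}{2}=\frac{q^n-1}{q-1}\cdot\frac{q(q-1)}{2}=\frac{q(q^n-1)}{2}.$

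Since everything here follows directly from the definition of $\Gamma(V)$, there is no substantial obstacle; the only point requiring a little care is the bookkeeping around the null vector $\theta$, which sits in every one-dimensional subspace. Each edge $\{\theta,v\}$ is assigned to the unique line through the non-zero vector $v$, so no double counting occurs and the ``sunflower of cliques'' picture is consistent. As a sanity check, one may instead count the $q^n-1$ edges incident to $\theta$ separately and add the $\frac{q^n-1}{q-1}\binom{q-1}{2}$ edges among the non-zero vectors, which again yields $m=\frac{q(q^n-1)}{2}$.
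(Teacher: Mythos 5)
Your proof is correct, but it takes a different route from the paper's. The paper simply applies the handshake lemma: $\theta$ has degree $q^n-1$, every non-zero vector has degree $q-1$ (its $q-1$ scalar multiples other than itself, including $\theta = 0\cdot a$), so $2m=(q^n-1)+(q^n-1)(q-1)=q(q^n-1)$. You instead exhibit $\Gamma(V)$ as an edge-disjoint union of $\frac{q^n-1}{q-1}$ copies of $K_q$, one for each one-dimensional subspace, all glued at the common vertex $\theta$; the key observation that two distinct lines meet only in $\theta$ and hence share no edge makes the count $\frac{q^n-1}{q-1}\binom{q}{2}$ immediate. Both arguments are complete and equally elementary. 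The degree-sum argument is shorter, but your decomposition makes explicit the global structure of $\Gamma(V)$ as a windmill of cliques through $\theta$ --- precisely the block structure of $A_\theta(\Gamma(V))$ that the paper exploits later when computing the adjacency, Laplacian and distance spectra --- so it arguably gives more reusable information.
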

\begin{proof}
The null vector $\theta$ is adjacent with every non-null vector of $V$. Since $|V|=q^n$, so degree of the vertex $\theta$ is $q^n-1$. Let $a \neq \theta$ be a vertex of $\Gamma(V)$. Then a vertex $b \in \Gamma(V)$ is adjacent with $a$ if and only if $b \neq a$ and $b=\lambda a$ for some $\lambda \in F$. Since $|F|=q$, so degree of each of the non-zero vector is $q-1$. Hence $2m=q^n-1+(q^n-1)(q-1)$ and it follows that $m=\frac{q(q^n-1)}{2}$.
\end{proof}

Since the null vector, $\theta$ is adjacent with every other vertices of $\Gamma(V)$, we have the following result.
\begin{lemma}
$\Gamma(V)$ is connected and $diam(\Gamma(V))=2$.
\end{lemma}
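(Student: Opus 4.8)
The plan is to lean entirely on the observation already recorded just before the statement: the null vector $\theta$ is adjacent to every other vertex of $\Gamma(V)$. This single fact yields both assertions with essentially no computation.

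First I would establish connectedness. Let $u$ and $v$ be distinct vertices. If one of them equals $\theta$, then $u$ and $v$ are joined by an edge and there is nothing to prove. Otherwise both are non-null, and since $\theta$ is adjacent to each of them, $u \thicksim \theta \thicksim v$ is a path joining $u$ and $v$. Hence any two vertices are joined by a path, so $\Gamma(V)$ is connected. The same reasoning gives the upper bound on the diameter: for all vertices $u,v$ we have $d(u,v) \le d(u,\theta) + d(\theta,v) \le 1 + 1 = 2$, so $diam(\Gamma(V)) \le 2$.

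It remains to show this bound is sharp, i.e.\ that the diameter is $2$ rather than $1$; here I would use the standing assumption $n = \dim(V) \ge 2$ (if $n=1$ the graph is complete by Theorem~\ref{completenes} and the diameter is $1$, so the statement is to be read for $n\ge 2$). Choose two linearly independent vectors $a,b\in V$, which exist because $\dim(V)\ge 2$. Then $a \ne b$ and $a,b$ are linearly independent, so $a\not\thicksim b$ by definition of $\Gamma(V)$, whence $d(a,b)\ge 2$. Together with the upper bound this forces $d(a,b)=2$, and therefore $diam(\Gamma(V))=2$. There is no genuine obstacle in this argument; the only subtlety worth flagging is that the equality $diam=2$, as opposed to merely $diam\le 2$, relies on $\Gamma(V)$ not being complete, which is precisely Theorem~\ref{completenes}.
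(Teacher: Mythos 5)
Your proof is correct and follows exactly the route the paper intends: the null vector $\theta$ is a universal vertex, giving connectedness and $diam \le 2$, with sharpness coming from two linearly independent vectors when $n \ge 2$. You are in fact more careful than the paper, which states the lemma as an immediate consequence of the universal-vertex observation without addressing the $n=1$ case or why the diameter is exactly $2$.
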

Also it follows that $\{\theta \}$ is the smallest dominating subset of $V$. Thus we have the following result.
\begin{lemma}
The domination number of $\Gamma(V)$ is $1$.
\end{lemma}
Note that the numbers of 1-dimensional subspaces of $V$ are $\frac{q^n-1}{q-1}=q^{n-1}+q^{n-2} + \cdots +1$. We call a non-zero vector of a 1-dimensional subspace $A$, a representative of $A$.

\begin{theorem}
The independence number of $\Gamma(V)$ is $q^{n-1}+q^{n-2}+ \cdots +1$.
\end{theorem}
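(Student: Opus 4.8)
The plan is to exploit the clique structure of $\Gamma(V)$ on the non-null vectors. First I would record that for a non-null vector $a$, its neighbourhood consists of the vectors $\lambda a$ with $\lambda \in F \setminus \{0,1\}$ together with $\theta$. In particular the $q-1$ non-null vectors lying in a fixed $1$-dimensional subspace $A$ are pairwise linearly dependent and distinct, hence pairwise adjacent, so each such set is a clique of $\Gamma(V)$; moreover these cliques are pairwise disjoint since distinct $1$-dimensional subspaces intersect only in $\theta$. As the number of $1$-dimensional subspaces of $V$ is $\frac{q^n-1}{q-1}=q^{n-1}+q^{n-2}+\cdots +1$, the set $V\setminus\{\theta\}$ is thereby partitioned into this many disjoint cliques.

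Next I would bound an arbitrary independent set $I$ from above. If $\theta\in I$, then since $\theta$ is adjacent to every other vertex of $\Gamma(V)$ we must have $I=\{\theta\}$, whose size $1$ is at most $q^{n-1}+\cdots +1$. If $\theta\notin I$, then $I\subseteq V\setminus\{\theta\}$, and $I$ meets each of the cliques above in at most one vertex, so $|I|\le q^{n-1}+q^{n-2}+\cdots +1$. Either way the independence number is at most $q^{n-1}+q^{n-2}+\cdots +1$.

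To see the bound is sharp, choose a representative $a_A$ from each $1$-dimensional subspace $A$ and set $I_0=\{a_A : A \text{ a } 1\text{-dimensional subspace of } V\}$. For distinct $A,B$ the vectors $a_A$ and $a_B$ span a $2$-dimensional subspace, so they are linearly independent and hence non-adjacent; also $\theta\notin I_0$ and $\theta$ is the only vertex adjacent to vectors from different lines would have to equal. Thus $I_0$ is an independent set with $|I_0|=q^{n-1}+q^{n-2}+\cdots +1$, and combined with the upper bound this gives the independence number exactly.

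The argument is essentially bookkeeping, so there is no serious obstacle; the only points needing a little care are the dichotomy on whether $\theta$ belongs to the independent set, and the observation that for non-null vectors non-adjacency coincides exactly with linear independence, which is what makes the family of line representatives independent.
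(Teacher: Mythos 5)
Your proof is correct and follows essentially the same route as the paper: both arguments rest on the fact that the $1$-dimensional subspaces partition the non-null vectors into $\frac{q^n-1}{q-1}$ cliques, so an independent set meets each in at most one vertex, while a choice of one representative per line achieves the bound. Your version is slightly more careful in explicitly treating the case $\theta\in I$ (which the paper glosses over), though the clause ``$\theta$ is the only vertex adjacent to vectors from different lines would have to equal'' is garbled and should be cleaned up.
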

\begin{proof}
It is easy to see that the subset $I$ of $V$ consists of representatives of the 1-dimensional subspaces of $V$, is a maximal independent set. Then $|I|=q^{n-1}+q^{n-2}+ \cdots +1$. If possible, let there exists an independent set $I'$ of cardinality greater than $q^{n-1}+q^{n-2}+ \cdots +1$. Since total number of 1-dimensional subspaces of $V$ is $q^{n-1}+ q^{n-2}+\cdots +1$, so it follows that there are at least two elements $a,b \in I'$ such that $a,b$ are in a same 1-dimensional subspace of $V$. Then $a \sim b$, which contradicts that $I'$ is independent. Thus the independence number of $\Gamma(V)$ is $q^{n-1}+ q^{n-2}+\cdots +1$.
\end{proof}

\begin{theorem}
Two vector spaces $V$ and $W$ are isomorphic if and only if their corresponding graphs $\Gamma(V)$ and $\Gamma(W)$ are isomorphic.
\end{theorem}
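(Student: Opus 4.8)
The plan is to treat the two implications separately. The forward direction is essentially formal: an isomorphism of vector spaces is a bijection that preserves linear dependence of pairs of vectors, so it is automatically an isomorphism of the associated linear dependence graphs. The content lies in the converse, and the strategy there is to show that both the cardinality $q$ of the scalar field and the dimension $n$ are invariants of the isomorphism type of $\Gamma(V)$; once $q$ and $n$ are recovered, $V$ is determined up to isomorphism, since there is a unique field of each prime-power order and a unique vector space of each dimension over it.

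For the forward implication, suppose $\varphi : V \to W$ is a vector space isomorphism. Then $\varphi$ is a bijection with $|V| = |W|$, and for distinct $a, b \in V$ the pair $\{a, b\}$ is linearly dependent precisely when $\{\varphi(a), \varphi(b)\}$ is linearly dependent in $W$. Hence $a \sim b$ in $\Gamma(V)$ if and only if $\varphi(a) \sim \varphi(b)$ in $\Gamma(W)$, so $\varphi$ itself witnesses $\Gamma(V) \cong \Gamma(W)$.

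For the converse, assume $\Gamma(V) \cong \Gamma(W)$, where $W$ has dimension $n'$ over a finite field of order $q'$. First I would dispose of the degenerate case: if $n = 1$, then $\Gamma(V)$ is complete by Theorem \ref{completenes}, hence so is $\Gamma(W)$, and therefore $n' = 1$ again by Theorem \ref{completenes}; comparing the numbers of vertices then gives $q = q'$, so $V \cong W$. By symmetry, if either of $n, n'$ equals $1$ then both do, so we may assume $n, n' \geq 2$. In this case the null vector $\theta$ has degree $|V| - 1 = q^n - 1$ in $\Gamma(V)$, while every other vertex has degree $q - 1$ (as computed in the proof of Theorem \ref{size}); since $q^n - 1 > q - 1$ for $n \geq 2$, the minimum degree of $\Gamma(V)$ is $q - 1$, and likewise the minimum degree of $\Gamma(W)$ is $q' - 1$. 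A graph isomorphism preserves the number of vertices and the minimum degree, so $q^n = (q')^{n'}$ and $q - 1 = q' - 1$; hence $q = q'$ and then $n = n'$. Thus $V$ and $W$ are vector spaces of equal dimension over fields of equal order, and therefore $V \cong W$.

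I expect no serious obstacle; the only points requiring care are the separate treatment of $n = 1$, where the ``universal vertex'' argument degenerates and Theorem \ref{completenes} must be invoked instead, and the observation that an isomorphism of vector spaces already forces the scalar fields to agree, so the forward direction needs nothing beyond preservation of linear dependence. As an alternative to the minimum-degree argument, one could instead combine the vertex count $q^n$ with the size formula $\frac{q(q^n-1)}{2}$ of Theorem \ref{size} to solve for $q$ and then $n$.
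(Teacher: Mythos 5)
Your proof is correct, and the converse direction takes a genuinely different route from the paper's. The paper fixes a single field $F$ of order $q$ for both spaces and recovers the dimension alone, by noting that the independence number $q^{m-1}+q^{m-2}+\cdots+1$ is a graph invariant and is strictly increasing in $m$, whence $\dim V=\dim W$. You instead recover \emph{both} parameters from the graph: the vertex count gives $q^n$, and (after splitting off the complete case $n=1$ via Theorem \ref{completenes}) the minimum degree gives $q-1$, so $q$ and then $n$ are determined. Your version is slightly more robust --- it does not presuppose that the two spaces live over the same field, and it makes explicit the degenerate case $n=1$ in which the universal vertex $\theta$ is not distinguished by degree --- at the cost of a short case analysis that the paper avoids. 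The paper's invariant (independence number) requires the small counting argument of its Theorem 3.4, whereas yours (order and minimum degree) is immediate from the degree computation in Theorem \ref{size}; either invariant suffices, and your suggested alternative via the edge count $\frac{q(q^n-1)}{2}$ would work equally well. The forward implication is the same in both treatments: a linear isomorphism preserves and reflects linear dependence of pairs, hence is itself a graph isomorphism.
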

\begin{proof}
Let $V$ and $W$ be isomorphic. Then there exists a vector space isomorphism $T:V \rightarrow W$. If $\alpha \sim \beta$ in $\Gamma(V)$, then $\alpha =\lambda \beta$ for some $ \lambda \in F$ and so $T(\alpha)=\lambda T(\beta)$. Thus $T(\alpha) \sim T(\beta)$ in $\Gamma(W)$. Since $T$ is an isomorphism, similarly $T(\alpha) \sim T(\beta)$ in $\Gamma(W)$ implies that $\alpha \sim \beta$ in $\Gamma(V)$. Hence $\Gamma(V)$ and $\Gamma(W)$ are isomorphic as graphs.

Conversely, let $\phi:\Gamma(V) \rightarrow \Gamma(W)$ be a graph isomorphism. Let $dim V=m$ and $dim W=n$. Since the independence numbers of two isomorphic graphs are the same, so we have $q^{m-1}+q^{m-2}+ \cdots +1 = q^{n-1}+q^{n-2}+ \cdots +1$ and so $m=n$. Hence the vector spaces $V$ and $W$ are isomorphic. 
\end{proof}

\begin{theorem}
Let $M$ be a clique of $\Gamma(V)$, then $M$ is maximal if and only if $M$ is an 1-dimensional subspace of $V$. Hence the clique number $\omega(\Gamma(V))$ of $\Gamma(V)$ is $q$.
\end{theorem}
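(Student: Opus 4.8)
The plan is to read off both implications directly from the adjacency rule recorded just after the definition: the null vector $\theta$ is joined to every other vertex, while two non-null vectors are adjacent exactly when one is a scalar multiple of the other, i.e. when they span the same $1$-dimensional subspace of $V$.

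First I would dispatch the easy direction. If $A$ is a $1$-dimensional subspace of $V$, then any two distinct vectors of $A$ lie in $A$ and are therefore scalar multiples of one another, so every pair is adjacent and $A$ is a clique; moreover $|A|=q$. For maximality, suppose $A\subseteq M$ for some clique $M$ and fix a non-null $a\in A$. Any $v\in M$ with $v\ne a$ is adjacent to $a$, hence linearly dependent with $a$, hence either equal to $\theta$ or of the form $\lambda a$ for some $\lambda\in F$; in either case $v\in A$. Since also $a\in A$, we get $M=A$, so $A$ is a maximal clique of size $q$.

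For the converse, let $M$ be an arbitrary maximal clique. Because $\theta$ is adjacent to everything, $M\ne\{\theta\}$ (such a singleton could be enlarged by any non-null vector), so $M$ contains some non-null vector $a$. Repeating the computation of the previous paragraph, every element of $M$ is linearly dependent with $a$ and hence lies in $\langle a\rangle$, so $M\subseteq\langle a\rangle$; since $\langle a\rangle$ is itself a clique, maximality of $M$ forces $M=\langle a\rangle$, a $1$-dimensional subspace. This establishes the stated equivalence. Finally, since the maximal cliques of $\Gamma(V)$ are precisely the $1$-dimensional subspaces, each having exactly $q$ elements, and since in a finite graph every clique is contained in a maximal clique, the largest clique has size $q$; that is, $\omega(\Gamma(V))=q$.

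There is no real obstacle here — the proof is a short unwinding of the definition. The only points that deserve a moment's attention are ruling out the degenerate clique $\{\theta\}$ before extracting a non-null vector from a maximal clique, and invoking the elementary fact that the clique number of a finite graph is attained on a maximal clique.
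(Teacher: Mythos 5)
Your proof is correct and follows essentially the same route as the paper: extract a non-null vector $a$ from a maximal clique, use adjacency to show $M\subseteq\langle a\rangle$, and use maximality to get equality. You are somewhat more careful than the paper in ruling out the degenerate clique $\{\theta\}$ and in justifying that $\omega(\Gamma(V))=q$ follows, but the underlying argument is identical.
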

\begin{proof}
Let $M$ be a maximal clique of $\Gamma(V)$. Since $\theta$ is adjacent with every other vertices of $\Gamma(V)$, there exists $x \neq  \{\theta\}$ such that $\theta , x \in M$. Since $M$ is maximal, $ <x> =\{\lambda x| \lambda \in F\} \subseteq M$. Now, let $z \in M$. Then $z \sim x$ implies that $z=\lambda x$ for some $\lambda \in F$ and so $z \in <x>$. Hence $M=<x>$ for some $x \in V \setminus \{\theta\}$, showing that $M$ is an 1-dimensional subspace of $V$.

Converse is trivial.
\end{proof}

\begin{theorem}
Chromatic number of $\Gamma(V)$ is $q$.
\end{theorem}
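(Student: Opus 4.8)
The plan is to show both $\chi(\Gamma(V))\ge q$ and $\chi(\Gamma(V))\le q$. The lower bound is immediate from the previous theorem: $\Gamma(V)$ contains a clique on $q$ vertices (namely any $1$-dimensional subspace of $V$), and since the chromatic number of a graph is always at least its clique number, $\chi(\Gamma(V))\ge\omega(\Gamma(V))=q$. So the content of the proof is the construction of a proper colouring using exactly $q$ colours.

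For the upper bound I would build an explicit colouring. Take the colour set to be $F$ itself, and fix once and for all a representative $a_A$ of each of the $q^{n-1}+q^{n-2}+\cdots+1$ one-dimensional subspaces $A$ of $V$. Each non-null vector $v$ lies in a unique such subspace $A$ and can be written uniquely as $v=\lambda a_A$ with $\lambda\in F\setminus\{0\}$; define $c(v)=\lambda$, and define $c(\theta)=0$. This uses exactly $|F|=q$ colours.

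It then remains to check that $c$ is proper, which I would do by the two types of edges. If $v=\lambda a_A$ and $w=\mu a_B$ are adjacent non-null vectors, they are linearly dependent, which forces $A=B$; since $v\neq w$ we get $\lambda\neq\mu$, hence $c(v)\neq c(w)$. If an edge is incident with $\theta$, its other endpoint is a non-null vector $v$ with $c(v)\in F\setminus\{0\}$, so $c(v)\neq 0=c(\theta)$. Thus adjacent vertices always receive distinct colours, giving $\chi(\Gamma(V))\le q$, and combined with the lower bound, $\chi(\Gamma(V))=q$.

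There is essentially no obstacle here; the only point requiring a little care is the well-definedness of the colouring, i.e.\ the uniqueness of the representation $v=\lambda a_A$, which is immediate once a representative of each line has been fixed. (Alternatively, one could simply observe that $\Gamma(V)\setminus\{\theta\}$ is a disjoint union of cliques of size $q-1$, colour it with $q-1$ colours, and add a fresh colour for $\theta$.)
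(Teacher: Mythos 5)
Your proof is correct and follows essentially the same route as the paper: the lower bound $\chi\ge\omega=q$ from the clique theorem, and the upper bound from the fact that all edges lie inside $1$-dimensional subspaces. The only difference is that you write out the explicit proper $q$-colouring (scalar coordinates relative to fixed line representatives, with a separate colour for $\theta$), whereas the paper merely asserts the upper bound from the fact that each line has $q$ elements; your version is the more careful one.
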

\begin{proof}
If two vectors $a$ and $b$ are adjacent in $\Gamma(V)$, then they are in a same 1-dimensional subspace of $V$. Now $|F|=q$ implies that an 1-dimensional subspace of $V$ contains $q$ elements. Hence the chromatic number $\chi(\Gamma(V)) \leq q$. Also $\chi(\Gamma(V)) \geq \omega(\Gamma(V))=q$. Hence $\chi(\Gamma(V))=q$.
\end{proof}

\begin{lemma}
If $q$ is odd, then $\Gamma(V)$ is Eulerian.
\end{lemma}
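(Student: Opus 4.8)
The plan is to invoke the classical theorem of Euler: a connected graph (with at least one edge) contains an Eulerian circuit if and only if every vertex has even degree. Since we have already established that $\Gamma(V)$ is connected, the whole statement reduces to a parity computation on the vertex degrees, all of which were already identified in the proof of Theorem~\ref{size}.

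First I would recall from the proof of Theorem~\ref{size} that the null vector $\theta$ has degree $q^n-1$, while every non-null vector $a\in V$ has degree $q-1$ (its only neighbours being $\theta$ and the $q-2$ other scalar multiples $\lambda a$ with $\lambda\in F\setminus\{0,1\}$, together with $\theta$ itself). Next, assuming $q$ is odd, observe that $q-1$ is even, so every non-null vertex has even degree; and $q^n$ is odd, so $q^n-1$ is even, whence $\theta$ also has even degree. Thus every vertex of $\Gamma(V)$ has even degree.

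Finally, since $\Gamma(V)$ is connected by the earlier lemma and contains at least one edge (indeed $\theta$ is adjacent to every other vertex and $|V|=q^n\ge 2$), Euler's criterion applies and $\Gamma(V)$ possesses a closed walk traversing each edge exactly once, i.e.\ $\Gamma(V)$ is Eulerian.

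I do not expect any genuine obstacle here; the only point requiring a word of care is that ``Eulerian'' in the sense defined in Section~2 means the existence of an Eulerian \emph{circuit} (a single cycle using every edge once), so one must cite the even-degree characterisation for Eulerian circuits rather than merely the weaker statement about Eulerian trails, and one should note in passing that the graph is non-trivial so that the degenerate edgeless case does not arise.
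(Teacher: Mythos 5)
Your proof is correct and follows the same route as the paper: both verify from the degree computation in Theorem~\ref{size} that every vertex has even degree when $q$ is odd, and then invoke Euler's even-degree criterion for the connected graph $\Gamma(V)$. Your version merely spells out the parity check and the non-triviality of the graph more explicitly than the paper does.
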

\begin{proof}
If $q$ is an odd positive integer, then from the proof of the Theorem \ref{size}, we see that the degree of every vertex of $\Gamma(V)$ is an even integer. Hence the graph $\Gamma(V)$ is Eulerian.
\end{proof}

\begin{lemma}
Edge connectivity of $\Gamma(V)$ is $q-1$.
\end{lemma}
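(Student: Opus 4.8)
The plan is to show $\kappa'(\Gamma(V))=q-1$ by proving the two inequalities $\kappa'(\Gamma(V))\le q-1$ and $\kappa'(\Gamma(V))\ge q-1$ separately. For the upper bound, recall from the proof of Theorem~\ref{size} that every non-null vector of $V$ has degree $q-1$ while $\theta$ has degree $q^n-1$, so the minimum degree of $\Gamma(V)$ equals $\min\{q-1,q^n-1\}=q-1$. Since edge connectivity never exceeds minimum degree, $\kappa'(\Gamma(V))\le q-1$; concretely, deleting the $q-1$ edges incident to any fixed non-null vertex $v$ isolates $v$, because the only neighbours of $v$ are the remaining $q-1$ elements of the $1$-dimensional subspace $\langle v\rangle$.

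For the lower bound I would exploit the structural picture of $\Gamma(V)$: it is the union of the $\frac{q^n-1}{q-1}$ one-dimensional subspaces of $V$, each of which induces a complete graph on $q$ vertices, and any two of these cliques meet exactly in $\theta$. Since $|V|=q^n\ge 2$ and $\Gamma(V)$ is connected, $\kappa'(\Gamma(V))$ equals the minimum, over all bipartitions $V=X\sqcup\overline{X}$ with both parts nonempty, of the number of edges joining $X$ and $\overline{X}$. Fix such a bipartition and assume without loss of generality $\theta\in X$. Choose any $v\in\overline{X}$; it exists and is non-null, and the clique $\langle v\rangle$ contains $\theta\in X$ and $v\in\overline{X}$. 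Writing $k=|\langle v\rangle\cap X|$, we have $1\le k\le q-1$, and since $\langle v\rangle$ induces a complete graph on $q$ vertices the number of its edges crossing the bipartition is $k(q-k)\ge q-1$. As these are all edges of the cut $[X,\overline{X}]$, every edge cut of $\Gamma(V)$ has at least $q-1$ edges, so $\kappa'(\Gamma(V))\ge q-1$.

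Combining the two bounds gives $\kappa'(\Gamma(V))=q-1$. There is no real obstacle in this argument; the only points requiring a little care are the standard reduction of edge connectivity to a minimum over vertex bipartitions, the observation that any such bipartition must ``slice through'' at least one of the cliques $\langle v\rangle$ (which is forced by the fact that all of them contain $\theta$), and the elementary estimate that splitting a complete graph on $q$ vertices into parts of sizes $k$ and $q-k$ leaves $k(q-k)\ge q-1$ crossing edges.
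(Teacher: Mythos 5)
Your proof is correct, but it takes a different route from the paper. The paper disposes of the lower bound in one line by quoting Plesn\'{\i}k's theorem (Theorem 3.1 of \cite{PL}) that a connected graph of diameter $2$ has edge connectivity equal to its minimum degree, which here is $q-1$. You instead give a self-contained argument: after the standard reduction of $\kappa'$ to the minimum size of a cut $[X,\overline{X}]$, you observe that any bipartition with $\theta\in X$ must split the clique $\langle v\rangle$ for each $v\in\overline{X}$, and a split of $K_q$ into parts of sizes $k$ and $q-k$ with $1\le k\le q-1$ contributes $k(q-k)\ge q-1$ crossing edges. All steps check out (including the boundary case $n=1$, where $\Gamma(V)=K_q$). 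What the paper's approach buys is brevity at the cost of an external citation; what yours buys is an elementary, structural proof that makes visible exactly why the bound is tight --- the extremal cuts are those isolating a single non-null vertex --- and it exploits the same ``cliques glued at $\theta$'' picture that underlies the spectral computations in Section 4. Either is acceptable; yours is arguably more informative.
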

\begin{proof}
We have diameter of $\Gamma(V)$ is $2$. So, by  Theorem 3.1 \cite{PL}, its edge connectivity is equal to its minimum degree, i.e. $q-1$.
\end{proof}

Deletion of the vertex $\theta$ makes the graph $\Gamma(V)$ disconnected. Hence we have the following result.
\begin{lemma}
The vertex connectivity of the graph $\Gamma(V)$ is $1$.
\end{lemma}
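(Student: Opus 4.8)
The plan is to verify the two inequalities $\kappa(\Gamma(V)) \geq 1$ and $\kappa(\Gamma(V)) \leq 1$ separately, where $\kappa$ denotes the vertex connectivity. The first is immediate from what has already been established: by the earlier lemma $\Gamma(V)$ is connected, and since $|V| = q^n \geq 2$ the graph is not the trivial one-vertex graph, so $\kappa(\Gamma(V)) \geq 1$.

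For the reverse inequality I would exhibit a single cut-vertex, namely the null vector $\theta$. First I would describe the structure of the graph $\Gamma(V) \setminus \{\theta\}$: by the definition of $\Gamma(V)$, two non-null vectors are adjacent precisely when they lie in a common $1$-dimensional subspace, so the connected components of $\Gamma(V) \setminus \{\theta\}$ are exactly the sets $\langle x\rangle \setminus \{\theta\}$ as $\langle x\rangle$ ranges over the $1$-dimensional subspaces of $V$ — each such set being a clique on $q-1$ vertices, with no edges running between distinct ones. Since there are $q^{n-1} + q^{n-2} + \cdots + 1$ such subspaces, and this number is at least $2$ as soon as $n \geq 2$, the graph $\Gamma(V) \setminus \{\theta\}$ is disconnected; hence $\theta$ is a cut-vertex and $\kappa(\Gamma(V)) \leq 1$. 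Combining the two bounds gives $\kappa(\Gamma(V)) = 1$.

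There is no genuine obstacle here; the argument is just a matter of reading off the component structure of $\Gamma(V)$ after deleting $\theta$, which is essentially the observation recorded in the remark preceding the lemma. The one point deserving care — and the reason the statement should be read with the standing assumption $n \geq 2$ — is the degenerate case $n = 1$: there $\Gamma(V)$ is the complete graph $K_q$ by Theorem \ref{completenes}, whose vertex connectivity is $q-1$, so the claim as literally stated fails for $q \geq 3$.
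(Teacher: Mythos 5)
Your proof is correct and follows essentially the same route as the paper, which simply observes that deleting $\theta$ disconnects $\Gamma(V)$; you merely make the component structure of $\Gamma(V)\setminus\{\theta\}$ explicit. Your caveat about $n=1$ is also well taken: there $\Gamma(V)=K_q$ has vertex connectivity $q-1$, so the lemma implicitly requires $n\geq 2$, a hypothesis the paper does not state.
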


\begin{lemma}
$\Gamma(V)$ is planar if and only if $q=2,3,4$.
\end{lemma}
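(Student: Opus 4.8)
The plan is to reduce the problem to the planarity of the single complete graph $K_q$ by first pinning down the global shape of $\Gamma(V)$. Recall (the paragraph following Definition~2.1 and the proof of Theorem~\ref{size}) that $\theta$ is joined to all $q^n-1$ other vertices, while a non-null vector $a$ is adjacent only to $\theta$ and to the remaining $q-2$ non-null vectors of $\langle a\rangle$. Since the $1$-dimensional subspaces partition the non-null vectors of $V$ into $t:=q^{n-1}+q^{n-2}+\cdots+1$ classes of size $q-1$, each such subspace together with $\theta$ induces a $K_q$, and any two of these $K_q$'s meet only in $\theta$. Hence $\Gamma(V)$ is precisely the one-point union at $\theta$ of $t$ copies of $K_q$ (for $q=3$ this is the friendship / Dutch windmill graph on $t$ triangles). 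Equivalently, $\theta$ is the only cut vertex of $\Gamma(V)$ and its blocks are exactly these $t$ copies of $K_q$ (copies of $K_2$, i.e.\ bridges, when $q=2$).

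For the forward direction, suppose $\Gamma(V)$ is planar. If $q\ge 5$, then for any $1$-dimensional subspace $A$ the induced subgraph on the $q$ vectors of $A$ is $K_q$, which contains $K_5$ as a subgraph; by Kuratowski's theorem $\Gamma(V)$ would be non-planar, a contradiction. Since $q$ is a prime power, $q\le 4$ leaves only $q\in\{2,3,4\}$.

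For the converse, suppose $q\in\{2,3,4\}$, so that $K_q$ is planar. I would invoke the standard fact that a one-point union of finitely many planar graphs is again planar: every planar graph admits a plane drawing in which a prescribed vertex lies on the outer face (pass to the sphere and take the unbounded face to be one incident with that vertex), so one can draw the $t$ copies of $K_q$, each carrying $\theta$ on its outer boundary, inside $t$ pairwise disjoint discs of the plane all touching at the common point $\theta$; the result is a plane drawing of $\Gamma(V)$. Alternatively one can quote the theorem that a graph is planar if and only if each of its blocks is planar, which applies verbatim given the block description above.

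Overall the argument is essentially a translation of the structure of $\Gamma(V)$ combined with Kuratowski's theorem, so no step is genuinely hard; the only point needing a little care is the converse, namely the justification that glueing planar graphs at a single vertex preserves planarity, which rests on the "prescribed vertex on the outer face" observation.
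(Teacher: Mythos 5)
Your proof is correct. The necessity direction is exactly the paper's argument: for $q\ge 5$ every $1$-dimensional subspace induces a $K_q\supseteq K_5$, so $\Gamma(V)$ is non-planar by Kuratowski. Where you genuinely diverge is in the sufficiency direction. The paper disposes of $q=2,3,4$ by writing that ``the rest follows from Figure~1,'' but that figure only exhibits particular low-dimensional examples, so strictly speaking the paper does not prove planarity for arbitrary $n$. You instead identify the global structure of $\Gamma(V)$ as the one-point union at $\theta$ of $t=q^{n-1}+\cdots+1$ copies of $K_q$ (the blocks of the graph, meeting pairwise only in the cut vertex $\theta$), and then invoke the standard facts that a graph is planar if and only if each of its blocks is planar, and that a planar graph can be drawn with any prescribed vertex on the outer face. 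This gives a complete, dimension-independent proof of the converse and is a genuine improvement on the paper's treatment; the only cost is having to justify (or cite) the vertex-amalgamation lemma, which you do correctly via the spherical-embedding argument. One cosmetic remark: for $n=1$ there is a single block and $\theta$ is not a cut vertex, but this does not affect the argument.
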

\begin{proof}
Every 1-dimensional subspace of $V$ is a complete subgraph of $\Gamma(V)$. So, for $q \geq 5$, $\Gamma(V)$ contains $K_5$, the complete subgraph of $\Gamma(V)$ of five vertices, which is nonplanar. Rest of the result follows from Figure 1.
\end{proof}

\section{Spectrums of the graph $\Gamma(V)$}

In this section, we determine the eigen values of the adjacency matrix, the Laplacian matrix and the distance matrix of the graph $\Gamma(V)$.

We note that the elements of an 1-dimensional subspace of $V$ are adjacent with each other. So, the adjacency matrix $A(\Gamma(V))$ of $\Gamma(V)$, is given below, where rows and columns are indexed by the vertices  $\theta$ and the non-zero elements of all 1-dimensional subspaces successively. Thus

\begin{align*}
&A(\Gamma(V))=\left(
\begin{array}{c|cccc|cccc|c|cccc}
0 &1 &1 &\cdots &1 &1 &1 &\cdots &1 &\cdots &1 &1 &\cdots &1\\
\hline
1 &0 & 1 &\cdots &1 &0 &0  & \cdots &0 &\cdots &0 &0&\cdots &0\\
1 &1 & 0 & \cdots &1 &0 &0  & \cdots &0 &\cdots &0 &0& \cdots &0\\
\vdots &\vdots &\vdots &\ddots  &\vdots &\vdots &\vdots &\ddots  &\vdots &\vdots &\vdots & \vdots &\ddots &\vdots\\
1 &1 &1 & \cdots &0 & 0 &0 & \cdots &0 &\cdots&0 &0 & \cdots &0\\
\hline
1 &0 &0 & \cdots &0 & 0 & 1 &\cdots &1 &\cdots &0 &0 & \cdots &0\\
1 &0 &0 & \cdots &0 &1 & 0 & \cdots &1 &\cdots &0  &0 &\cdots &0\\
\vdots &\vdots &\vdots &\ddots  &\vdots &\vdots &\vdots &\ddots  &\vdots &\vdots &\vdots &\vdots &\ddots &\vdots\\
1 &0 & 0 & \cdots &0 &1 &1 & \cdots &0  &\cdots &0 &0 & \cdots &0\\
\hline
\vdots &\vdots &\vdots &\vdots  &\vdots &\vdots &\vdots &\vdots  &\vdots &\ddots &\vdots & \vdots &\vdots &\vdots\\
\hline
1 &0 &0 & \cdots &0 & 0 & 0 &\cdots &0 &\cdots &0 &1 & \cdots &1\\
1 &0 &0 & \cdots &0 &0 & 0 & \cdots &0 &\cdots &1  &0 &\cdots &1\\
\vdots &\vdots &\vdots &\ddots  &\vdots &\vdots &\vdots &\ddots  &\vdots &\vdots &\vdots &\vdots &\ddots &\vdots\\
1 &0 & 0 & \cdots &0 &0 &0 & \cdots &0  &\cdots &1 &1 & \cdots &0
\end{array}
\right )
\end{align*}

Order of this matrix is $q^n$. Hence $A_{\theta}(\Gamma(V))$ is a matrix of order $q^n-1$. Also $A_{\theta}(\Gamma(V))$ is a block diagonal matrix, having $q^{n-1}+q^{n-2}+ \cdots +q+1$ blocks each of order $q-1$ and entries on the principal diagonal are $0$ and $1$ otherwise. Also, the degree of the null vector is $q^n-1$ and the degree of all other elements is $q-1$. So the diagonal matrix of vertex degrees of $\Gamma(V)$ is 
\begin{center}
$D(\Gamma(V))=\left(
\begin{array}{c|cccc}
q^n-1 &0 &0 &\cdots &0\\
\hline
0 &q-1 &0 &\cdots &0\\
0 &0 &q-1 &\cdots &0\\
\vdots &\vdots &\vdots &\ddots &\vdots\\
0 &0 &0 &\cdots &q-1
\end{array}
\right )$
\end{center} 

Then the Laplacian matrix of $\Gamma(V)$ is 
\begin{center}
$L(\Gamma(V))=\left(
\begin{array}{c|cccc|c|cccc}
q^n-1 &-1 &-1 &\cdots &-1 &\cdots &-1 &-1 &\cdots &-1\\
\hline
-1 &q-1 & -1 &\cdots &-1  &\cdots &0 &0&\cdots &0\\
-1 &-1 & q-1 & \cdots &-1 &\cdots &0 &0& \cdots &0\\
\vdots &\vdots &\vdots &\ddots  &\vdots &\vdots &\vdots & \vdots &\ddots &\vdots\\
-1 &-1 &-1 & \cdots &q-1 &\cdots&0 &0 & \cdots &0\\
\hline
\vdots &\vdots &\vdots &\vdots  &\vdots &\ddots &\vdots & \vdots &\vdots &\vdots\\
\hline
-1 &0 &0 & \cdots &0 &\cdots &q-1 &-1 & \cdots &-1\\
-1 &0 &0 & \cdots &0 &\cdots &-1  &q-1 &\cdots &-1\\
\vdots &\vdots &\vdots &\ddots  &\vdots  &\vdots &\vdots &\vdots &\ddots &\vdots\\
-1 &0 & 0 & \cdots &0 &\cdots &-1 &-1 & \cdots &q-1
\end{array}
\right )$
\end{center}

The distance between any two vertices of an 1-dimensional subspace is 1 and is 2 if they belong to two distinct 1-dimensional subspaces of $V$. So the distance matrix of $\Gamma(V)$ is 
\begin{center}
$\mathbb{D}(V)=\left(
\begin{array}{c|cccc|cccc|c|cccc}
0 &1 &1 &\cdots &1 &1 &1 &\cdots &1 &\cdots &1 &1 &\cdots &1\\
\hline
1 &0 & 1 &\cdots &1 &2 &2  & \cdots &2 &\cdots &2 &2 &\cdots &2\\
1 &1 & 0 & \cdots &1 &2 &2  & \cdots &2 &\cdots &2 &2& \cdots &2\\
\vdots &\vdots &\vdots &\ddots  &\vdots &\vdots &\vdots &\ddots  &\vdots &\vdots &\vdots & \vdots &\ddots &\vdots\\
1 &1 &1 & \cdots &0 & 2 &2 & \cdots &2 &\cdots&2 &2 & \cdots &2\\
\hline
1 &2 &2 & \cdots &2 & 0 & 1 &\cdots &1 &\cdots &2 &2 & \cdots &2\\
1 &2 &2 & \cdots &2 &1 & 0 & \cdots &1 &\cdots &2  &2 &\cdots &2\\
\vdots &\vdots &\vdots &\ddots  &\vdots &\vdots &\vdots &\ddots  &\vdots &\vdots &\vdots &\vdots &\ddots &\vdots\\
1 &2 & 2 & \cdots &2 &1 &1 & \cdots &0  &\cdots &2 &2 & \cdots &2\\
\hline
\vdots &\vdots &\vdots &\vdots  &\vdots &\vdots &\vdots &\vdots  &\vdots &\ddots &\vdots & \vdots &\vdots &\vdots\\
\hline
1 &2 &2 & \cdots &2 & 2 & 2 &\cdots &2 &\cdots &0 &1 & \cdots &1\\
1 &2 &2 & \cdots &2 &2 & 2 & \cdots &2 &\cdots &1  &0 &\cdots &1\\
\vdots &\vdots &\vdots &\ddots  &\vdots &\vdots &\vdots &\ddots  &\vdots &\vdots &\vdots &\vdots &\ddots &\vdots\\
1 &2 &2 & \cdots &2 &2 &2 & \cdots &2  &\cdots &1 &1 & \cdots &0
\end{array}
\right )$
\end{center}
Hence we have the following results.
\begin{theorem}      \label{adjacencypoly}
The characteristic polynomial of the adjacency matrix of $\Gamma(V)$ is 
\begin{center}
$\Theta(A(\Gamma(V)),x)=\{x^2-(q-2)x-(q^n-1)\}\{x-(q-2)\}^{q^{n-1}+ \cdots +q} (x+1)^{(q-2)(q^{n-1}+ \cdots +1)}$.
\end{center}
\end{theorem}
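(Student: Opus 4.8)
The plan is to exploit the block structure of $A(\Gamma(V))$ displayed above. Write $N=q^{n-1}+q^{n-2}+\cdots+q+1$ for the number of $1$-dimensional subspaces. Then $A_{\theta}(\Gamma(V))$ is block diagonal with $N$ blocks, each equal to $J_{q-1}-I_{q-1}$ (the adjacency matrix of $K_{q-1}$), while the row and column indexed by $\theta$ are all $1$'s off the diagonal; equivalently $\Gamma(V)$ is obtained by joining one vertex $\theta$ to the disjoint union of $N$ copies of $K_{q-1}$. First I would record the spectral data of a single block: $J_{q-1}-I_{q-1}$ has eigenvalue $q-2$ on the all-ones vector and eigenvalue $-1$ on the $(q-2)$-dimensional subspace of vectors with zero coordinate sum.

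Next I would build the eigenvectors of $A(\Gamma(V))$ explicitly. For any block $B_j$ and any vector $v$ supported on $B_j$ with zero coordinate sum, the extension $\hat v$ of $v$ by zeros (in particular with $\hat v_\theta=0$) satisfies $A(\Gamma(V))\hat v=-\hat v$, since the $\theta$-row contributes $\sum_i v_i=0$ and each block acts as $-I$ on zero-sum vectors; this produces an eigenspace for $-1$ of dimension $(q-2)N$. The remaining eigenvectors I would seek among vectors that are constant, say equal to $a_j$, on each block $B_j$ and equal to $c$ at $\theta$. The eigenvalue equation then reduces to the two scalar relations $\sum_j (q-1)a_j=\lambda c$ and $c+(q-2)a_j=\lambda a_j$ for every $j$. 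If $\lambda=q-2$ these force $c=0$ and $\sum_j a_j=0$, an $(N-1)$-dimensional solution space, giving eigenvalue $q-2$ with multiplicity $N-1=q^{n-1}+\cdots+q$. If $\lambda\neq q-2$ all the $a_j$ coincide and one is left with $\lambda^2-(q-2)\lambda=(q-1)N=q^n-1$, i.e. the quadratic factor $x^2-(q-2)x-(q^n-1)$, contributing two more eigenvalues.

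Finally I would verify completeness by a dimension count: the three families just constructed are mutually orthogonal (the zero-sum local vectors are orthogonal to every vector constant on blocks, and the $\lambda=q-2$ and quadratic families are orthogonal as well), and their dimensions total $(q-2)N+(N-1)+2=(q-1)N+1=(q^n-1)+1=q^n$, the order of $A(\Gamma(V))$. Hence these are all the eigenvalues with their correct multiplicities, and multiplying the corresponding factors yields exactly $\Theta(A(\Gamma(V)),x)=\{x^2-(q-2)x-(q^n-1)\}\{x-(q-2)\}^{q^{n-1}+\cdots+q}(x+1)^{(q-2)(q^{n-1}+\cdots+1)}$. (Equivalently, one may observe that the partition $\{\{\theta\},B_1,\ldots,B_N\}$ is equitable, compute the characteristic polynomial of the $(N+1)\times(N+1)$ quotient matrix $\bigl(\begin{smallmatrix}0 & (q-1)\mathbf{1}^T\\ \mathbf{1} & (q-2)I_N\end{smallmatrix}\bigr)$ by a Schur-complement determinant, and adjoin the local eigenvalue $-1$; this is the same computation organized differently.) The only genuinely delicate point is the bookkeeping ensuring the list of eigenvectors is exhaustive — the dimension count is what closes that gap — together with the numerical identity $(q-1)(q^{n-1}+\cdots+1)=q^n-1$, which is exactly what makes the constant term of the quadratic factor come out as stated.
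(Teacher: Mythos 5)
Your proof is correct, but it proceeds quite differently from the paper's. The paper computes $\det(xI-A)$ directly by row operations: it multiplies the first row by $x-(q-2)$, adds all rows to it, expands along the first row to peel off the factor $x^2-(q-2)x-(q^n-1)$ together with the $N$-th power of a $(q-1)\times(q-1)$ circulant-type determinant, and then reduces that small determinant recursively to $(x+1)^{q-2}(x-(q-2))$. You instead recognize $\Gamma(V)$ as the join of the single vertex $\theta$ with $N=q^{n-1}+\cdots+1$ disjoint copies of $K_{q-1}$, exhibit explicit eigenvectors in three families (zero-sum vectors inside a block for eigenvalue $-1$; block-constant vectors with $c=0$ and $\sum_j a_j=0$ for eigenvalue $q-2$; globally constant-on-blocks vectors for the two roots of the quadratic), and close the argument with the dimension count $(q-2)N+(N-1)+2=q^n$. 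All the individual verifications check out, including the key identity $(q-1)N=q^n-1$ and the multiplicity $N-1=q^{n-1}+\cdots+q$, and the completeness argument via orthogonality/dimension is sound. What your approach buys is structural information — the eigenvectors themselves and the equitable-partition viewpoint, which make the corollary on graph energy and the analogous Laplacian and distance computations essentially automatic — and it sidesteps the paper's slightly awkward device of multiplying a row by $x-(q-2)$ and later dividing by it (legitimate as a polynomial identity off the zero set, hence everywhere, but it requires a word of justification the paper does not give). What the paper's determinant manipulation buys is that it needs no prior guess about what the eigenvectors look like and generalizes mechanically to the distance matrix, where the entries $2$ between blocks make the "join of cliques" eigenvector description marginally less immediate.
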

\begin{proof}
The characteristic polynomial of $A(\Gamma(V))$ is
\begin{center}
$\Theta(A(\Gamma(V)),x)=
\begin{array}{|cccccccccc|}
x &-1 &-1 &\cdots &-1 &\cdots &-1 &-1 &\cdots &-1\\
-1 &x & -1 &\cdots &-1 &\cdots &0 &0&\cdots &0\\
-1 &-1 & x & \cdots &-1 &\cdots &0 &0& \cdots &0\\
\vdots &\vdots &\vdots &\ddots  &\vdots &\vdots &\vdots & \vdots &\vdots &\vdots\\
-1 &-1 & -1 & \cdots &x &\cdots &0 &0& \cdots &0\\
\vdots &\vdots &\vdots &\vdots  &\vdots &\ddots &\vdots & \vdots &\vdots &\vdots\\
-1 &0 &0 & \cdots &0 &\cdots &x &-1 & \cdots &-1\\
-1 &0 &0 & \cdots &0 &\cdots &-1  &x &\cdots &-1\\
\vdots &\vdots &\vdots &\ddots  &\vdots &\vdots &\vdots &\vdots &\ddots &\vdots\\
-1 &0 & 0 & \cdots &0   &\cdots &-1 &-1 & \cdots &x
\end{array}
$
\end{center}

Multiply the first row by $(x-(q-2))$ and apply the row operation $R_1'=R_1+R_2+ \cdots+R_{q^n}$. Then expanding the determinant in terms of the first row, we get
\begin{center}
$\Theta (A(\Gamma(V)),x)=\frac{\{x^2-(q-2)x-(q^n-1)\}}{x-(q-2)}\cdot
\begin{array}{|cccc|}
x & -1 &\cdots &-1 \\
-1 & x & \cdots &-1 \\
\vdots &\vdots &\ddots  &\vdots\\
-1 &-1 & \cdots &x
\end{array}^{q^{n-1}+ q^{n-2}+ \cdots +q+1}_{(q-1) \times (q-1)}$.
\end{center}

Let 
\begin{equation}    \label{adjacency}
A_1=\begin{array}{|cccc|}
x & -1 &\cdots &-1 \\
-1 & x & \cdots &-1 \\
\vdots &\vdots &\ddots  &\vdots\\
-1 &-1 & \cdots &x
\end{array}_{(q-1)\times (q-1)}
\end{equation}

Multiply the first row of \ref{adjacency}, by $(x-(q-3))$ and apply the row operation by $R_1'=R_1+R_2+ \cdots+R_{q-1}$. Then expanding the determinant in terms of the first row, we get
\begin{center}
$A_1=\frac{(x+1)(x-(q-2))}{x-(q-3)}\cdot
\begin{array}{|cccc|}
x & -1 &\cdots &-1 \\
-1 & x & \cdots &-1 \\
\vdots &\vdots &\ddots  &\vdots\\
-1 &-1 & \cdots &x
\end{array}_{(q-2)\times (q-2)}$
\end{center}

Again multiply the 1 st row by $(x-(q-4))$ and then apply the row operation $R_1'=R_1+R_2+\cdots +R_{q-2}$. Then expanding in terms of 1 st row, we get
\begin{center}
$A_1=\frac{(x+1)^2(x-(q-2))}{x-(q-4)}\cdot
\begin{array}{|cccc|}
x & -1 &\cdots &-1 \\
-1 & x & \cdots &-1 \\
\vdots &\vdots &\ddots  &\vdots\\
-1 &-1 & \cdots &x
\end{array}_{(q-3)\times (q-3)}$
\end{center}

Continuing in this way, we get $A_1=(x+1)^{q-2}(x-(q-2))$ and so, 
\begin{align*}
\Theta(A(\Gamma(V)),x)=&\frac{\{x^2-(q-2)x-(q^n-1)\}}{\{x-(q-2)\}}\{(x-(q-2)\}^{q^{n-1}+ \cdots +1} (x+1)^{(q-2)(q^{n-1}+ \cdots +1)}\\
=&\{x^2-(q-2)x-(q^n-1)\}\{x-(q-2)\}^{q^{n-1}+ \cdots +q} (x+1)^{(q-2)(q^{n-1}+ \cdots +1)}.
\end{align*}
\end{proof}

If $\lambda_1, \lambda_2, \cdots , \lambda_n$ are the adjacency eigen values of a graph $\Gamma$, then the energy of the graph, denoted by $\varepsilon (\Gamma)$, is defined to be $\sum^n_{i=1}|\lambda_i|$. Thus from Theorem \ref{adjacencypoly}, it follows that
\begin{corollary}
The energy of the graph $\Gamma(V)$ is $2(q-2)(q^{n-1}+ \cdots +1)$.
\end{corollary}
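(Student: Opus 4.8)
The plan is to read the adjacency spectrum of $\Gamma(V)$ directly off Theorem~\ref{adjacencypoly} and then evaluate $\varepsilon(\Gamma(V))=\sum_i|\lambda_i|$ term by term. Write $S=q^{n-1}+q^{n-2}+\cdots+1=\frac{q^n-1}{q-1}$ for the number of $1$-dimensional subspaces of $V$. From the three factors of $\Theta(A(\Gamma(V)),x)$ the eigenvalues are: $q-2$, with multiplicity $q^{n-1}+\cdots+q=S-1$; $-1$, with multiplicity $(q-2)S$; and the two simple roots $\lambda_\pm=\tfrac12\bigl((q-2)\pm\sqrt{(q-2)^2+4(q^n-1)}\bigr)$ of $x^2-(q-2)x-(q^n-1)$. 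As a consistency check I would verify $2+(S-1)+(q-2)S=1+(q-1)S=q^n$, the order of $A(\Gamma(V))$.

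Next I would determine the signs so the absolute values can be removed. Here $q-2\ge 0$ and $-1<0$, while the product of the roots of the quadratic is $-(q^n-1)<0$, so exactly one of $\lambda_+,\lambda_-$ is positive and the other negative; hence $|\lambda_+|+|\lambda_-|=|\lambda_+-\lambda_-|=\sqrt{(\lambda_++\lambda_-)^2-4\lambda_+\lambda_-}=\sqrt{(q-2)^2+4(q^n-1)}$. Assembling the three contributions then gives
\[
\varepsilon(\Gamma(V))=\sqrt{(q-2)^2+4(q^n-1)}+(q-2)(S-1)+(q-2)S=\sqrt{(q-2)^2+4(q^n-1)}+(q-2)(2S-1),
\]
and the final task is to compare this with the asserted value $2(q-2)S=2(q-2)(q^{n-1}+\cdots+1)$.

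This last comparison is the step I expect to be the genuine obstacle. The polynomial part $(q-2)(2S-1)$ is elementary, but the surd $\sqrt{(q-2)^2+4(q^n-1)}$ contributed by the pair $\lambda_\pm$ does not visibly telescope against it — e.g.\ already for $q=3,\,n=2$ the two relevant eigenvalues are $\tfrac{1\pm\sqrt{33}}{2}$, which are irrational, so the computation above yields $\varepsilon(\Gamma(V))=7+\sqrt{33}$ rather than the predicted $8$. Before finalizing I would therefore re-examine (i) the multiplicity bookkeeping, in particular the single cancellation of an $(x-(q-2))$ factor against the denominator in the proof of Theorem~\ref{adjacencypoly}; (ii) the sign analysis above; and (iii) whether the intended statement in fact concerns the induced subgraph $\Gamma(V)\setminus\{\theta\}$, which is a disjoint union of $S$ copies of $K_{q-1}$ and hence has energy exactly $S\cdot 2(q-2)=2(q-2)S$. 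In short, the computation above gives a clean closed form for $\varepsilon(\Gamma(V))$, and the crux is that the contribution $\sqrt{(q-2)^2+4(q^n-1)}$ of the eigenvalues $\lambda_\pm$ must be accounted for — reconciling it with $2(q-2)(q^{n-1}+\cdots+1)$ is what the proof hinges on, and as the statement stands this seems to require discarding the eigenvalues $\lambda_\pm$.
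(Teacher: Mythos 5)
Your computation is correct, and it is the corollary in the paper that is at fault, not your argument. The paper gives no actual proof (the corollary is asserted to ``follow'' from Theorem~\ref{adjacencypoly}), and reading the spectrum off that theorem exactly as you do --- eigenvalues $\lambda_\pm=\tfrac12\bigl((q-2)\pm\sqrt{(q-2)^2+4(q^n-1)}\bigr)$, $q-2$ with multiplicity $S-1$, and $-1$ with multiplicity $(q-2)S$, where $S=\tfrac{q^n-1}{q-1}$ --- gives
\[
\varepsilon(\Gamma(V))=\sqrt{(q-2)^2+4(q^n-1)}+(q-2)(2S-1),
\]
which equals the asserted $2(q-2)S$ only if $\sqrt{(q-2)^2+4(q^n-1)}=q-2$, i.e.\ only if $q^n=1$. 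Your sanity checks are the right ones and they pass: the multiplicities sum to $q^n$, the eigenvalue sum is $0$, and the sum of squares is $q(q^n-1)=2m$, so Theorem~\ref{adjacencypoly} itself is consistent and the error is localized in the corollary. Your counterexample is also decisive: for $q=3$, $n=2$ the graph is the friendship graph on $9$ vertices with energy $7+\sqrt{33}\neq 8$; even more starkly, for $q=2$ the graph is the star $K_{1,2^n-1}$ with energy $2\sqrt{2^n-1}$, while the formula predicts $0$.

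As for how the wrong value arose: $2(q-2)S$ is exactly what one gets by assigning the eigenvalue $q-2$ the multiplicity $S$ instead of $S-1$ (i.e.\ using the factor $\{x-(q-2)\}^{S}$ from the proof of Theorem~\ref{adjacencypoly} \emph{before} one copy is cancelled against the denominator) and then discarding the two roots of $x^2-(q-2)x-(q^n-1)$ altogether. Equivalently, as you observe, it is the energy of $\Gamma(V)\setminus\{\theta\}$, a disjoint union of $S$ copies of $K_{q-1}$. So your proof strategy is the same as the paper's intended one (read the spectrum from Theorem~\ref{adjacencypoly} and sum absolute values), but carried out correctly it refutes the stated corollary; the correct value is the closed form displayed above.
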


\begin{theorem}       \label{laplacianpoly}
The characteristic polynomial of the Laplacian matrix of $\Gamma(V)$ is
\begin{center}
$\Theta (L(\Gamma(V)),x)=x(x-q^n)(x-1)^{q^{n-1}+ \cdots +q}(x-q)^{(q-2)(q^{n-1}+ \cdots +1)}$.
\end{center}
\end{theorem}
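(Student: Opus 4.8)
The plan is to imitate the proof of Theorem \ref{adjacencypoly}, exploiting that $L(\Gamma(V))$ has essentially the same block pattern as $A(\Gamma(V))$: a first row and column indexed by $\theta$, whose diagonal entry is $q^n-1$ and whose remaining entries are $-1$, together with a block-diagonal remainder consisting of $N:=q^{n-1}+\cdots+q+1$ blocks of order $q-1$, each block having $q-1$ on the diagonal and $-1$ elsewhere, and zero entries between distinct blocks. I would begin by writing $\Theta(L(\Gamma(V)),x)=\det(xI-L(\Gamma(V)))$ as an explicit $q^n\times q^n$ determinant.

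Then I would carry out the first-row manipulation. Multiply the first row by $(x-1)$ and replace it by $R_1'=R_1-(R_2+\cdots+R_{q^n})$. For a column $j\ge 2$ indexed by a vector lying in the $k$-th one-dimensional subspace, the rows $R_2,\dots,R_{q^n}$ contribute $(x-(q-1))+(q-2)\cdot 1=x-1$ to that column (the diagonal entry plus its $q-2$ within-block entries, all other blocks contributing $0$), which cancels the $x-1$ placed in the modified first row; in the first column the rows $R_2,\dots,R_{q^n}$ contribute $q^n-1$, so the $(1,1)$-entry becomes $(x-1)(x-(q^n-1))-(q^n-1)=x(x-q^n)$. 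Expanding along the new first row and dividing out the factor $x-1$ introduced at the start, I obtain
\begin{center}
$\Theta(L(\Gamma(V)),x)=\dfrac{x(x-q^n)}{x-1}\cdot (L_1)^{N}$,
\end{center}
where $L_1$ is the $(q-1)\times(q-1)$ determinant with $x-(q-1)$ on the diagonal and $1$ elsewhere.

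Finally I would evaluate $L_1=\det\bigl((x-q)I_{q-1}+J_{q-1}\bigr)$; since $(x-q)I+J$ has eigenvalue $x-1$ on the all-ones vector and $x-q$ on its orthogonal complement, this determinant equals $(x-1)(x-q)^{q-2}$ (alternatively one runs the same iterative row reduction as was used for $A_1$ in Theorem \ref{adjacencypoly}). Substituting this back and cancelling one power of $x-1$ gives $\Theta(L(\Gamma(V)),x)=x(x-q^n)(x-1)^{N-1}(x-q)^{(q-2)N}$, which is the stated formula once one writes $N-1=q^{n-1}+\cdots+q$ and $(q-2)N=(q-2)(q^{n-1}+\cdots+1)$. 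I do not expect a genuine obstacle here; the only point deserving a remark is that the division by $x-1$ is legitimate because both sides are polynomials agreeing for all $x\ne 1$, hence identically, exactly as in Theorem \ref{adjacencypoly}. As an independent check, the same spectrum also follows from the observation that $\Gamma(V)$ is the join of $K_1$ with the disjoint union of $N$ copies of $K_{q-1}$, via the standard formula for the Laplacian spectrum of a join.
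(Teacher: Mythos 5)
Your proposal is correct and follows essentially the same route as the paper: reduce $\Theta(L(\Gamma(V)),x)$ to $\frac{x(x-q^n)}{x-1}\,L_1^{\,N}$ by a first-row manipulation and then evaluate the $(q-1)\times(q-1)$ block determinant $L_1=(x-1)(x-q)^{q-2}$. The only cosmetic differences are that you fold the paper's two successive row operations into a single one and compute $L_1$ via the eigenvalues of $(x-q)I+J$ instead of the paper's iterated row reduction.
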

\begin{proof}
The characteristic polynomial of $L(\Gamma(V))$ is
\begin{center}
$\Theta(L(\Gamma(V)),x)=
\begin{array}{|cccccccc|}
x-(q^n-1) &1 &\cdots &1  &\cdots &1 &\cdots &1\\
1 &x-(q-1)  &\cdots &1 &\cdots &0 & \cdots &0\\
\vdots &\vdots  &\ddots  &\vdots &\vdots &\vdots &\ddots &\vdots\\
1 &1  &\cdots &x-(q-1)  &\cdots &0 & \cdots &0\\
\vdots &\vdots &\vdots  &\vdots &\ddots & \vdots &\vdots &\vdots\\
1 &0 & \cdots &0 &\cdots &x-(q-1) & \cdots &1\\
\vdots &\vdots &\ddots  &\vdots &\vdots &\vdots  &\ddots &\vdots\\
1 &0 & \cdots &0  &\cdots &1 & \cdots &x-(q-1)
\end{array}$
\end{center}

Applying the row operation $R_1'=R_1+R_2+\cdots R_{q^n}$, we get
\begin{center}
$\Theta(L(\Gamma(V)),x)=
x \cdot
\begin{array}{|cccccccc|}
1 &1 &\cdots &1  &\cdots &1 &\cdots &1\\
1 &x-(q-1)  &\cdots &1 &\cdots &0 & \cdots &0\\
\vdots &\vdots  &\ddots  &\vdots &\vdots &\vdots &\ddots &\vdots\\
1 &1  &\cdots &x-(q-1)  &\cdots &0 & \cdots &0\\
\vdots &\vdots &\vdots  &\vdots &\ddots & \vdots &\vdots &\vdots\\
1 &0 & \cdots &0 &\cdots &x-(q-1) & \cdots &1\\
\vdots &\vdots &\ddots  &\vdots &\vdots &\vdots  &\vdots &\vdots\\
1 &0 & \cdots &0  &\cdots &1 & \cdots &x-(q-1)
\end{array}$
\end{center}

Multiply the first row by $(x-1)$ and then apply the row operation $R_1'=R_1-R_2- \cdots-R_{q^n}$. Then expanding the determinant in terms of the first row, we get
\begin{center}
$\Theta (L(\Gamma(V)),x)=\frac{x(x-q^n)}{(x-1)}\cdot
\begin{array}{|cccc|}
x-(q-1) & 1 &\cdots &1 \\
1 & x-(q-1) & \cdots &1 \\
\vdots &\vdots &\ddots  &\vdots\\
1 &1 & \cdots &x-(q-1)
\end{array}^{q^{n-1}+q^{n-2}+\cdots +q+1}_{(q-1) \times (q-1)}$
\end{center}

Let 
\begin{equation}    \label{laplacian}
L_1=\begin{array}{|cccc|}
x-(q-1) & 1 &\cdots &1 \\
1 & x-(q-1) & \cdots &1 \\
\vdots &\vdots &\ddots  &\vdots\\
1 &1 & \cdots &x-(q-1)
\end{array}_{(q-1)\times (q-1)}
\end{equation}

Multiply the first row of \ref{laplacian}, by $(x-2)$ and apply the row operation by $R_1'=R_1-R_2- \cdots-R_{q-1}$. Then expanding the determinant in terms of the first row, we get
\begin{center}
$L_1=\frac{(x-1)(x-q)}{(x-2)}\cdot 
\begin{array}{|cccc|}
x-(q-1) & 1 &\cdots &1 \\
1 & x-(q-1) & \cdots &1 \\
\vdots &\vdots &\ddots  &\vdots\\
1 &1 & \cdots &x-(q-1)
\end{array}_{(q-2)\times (q-2)}$
\end{center}

Again multiply the 1 st row by $(x-3)$ and then apply the row operation $R_1'=R_1-R_2-\cdots -R_{q-2}$. Then expanding in terms of 1 st row, we get
\begin{center}
$L_1=\frac{(x-1)(x-q)^2}{(x-3)}\cdot 
\begin{array}{|cccc|}
x-(q-1) & 1 &\cdots &1 \\
1 & x-(q-1) & \cdots &1 \\
\vdots &\vdots &\ddots  &\vdots\\
1 &1 & \cdots &x-(q-1)
\end{array}_{(q-3)\times (q-3)}$
\end{center}

Continuing in this way, we get $L_1=(x-1)(x-q)^{q-2}$.

So, 
\begin{align*}
\Theta(L(\Gamma(V)),x)=&\frac{x(x-q^n)}{(x-1)}\cdot \{(x-1)(x-q)^{q-2}\}^{(q^{n-1}+ \cdots +1)}\\
=&x(x-q^n)(x-1)^{q^{n-1}+ \cdots +q}(x-q)^{(q-2)(q^{n-1}+ \cdots +1)}.
\end{align*}
\end{proof}

The second smallest eigen value of the Laplacian matrix $L(\Gamma)$ of a graph $\Gamma$, is denoted by $a(\Gamma)$. This quantity shares many properties with the vertex or edge-connectivity and according to Fiedler \cite{Fiedler}, is called the algebraic connectivity of $\Gamma$. So, from Theorem \ref{laplacianpoly}, we have the following result. 
\begin{corollary}
The algebraic connectivity of $\Gamma(V)$ is $1$.
\end{corollary}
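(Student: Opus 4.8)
The plan is to obtain the statement as an immediate consequence of Theorem \ref{laplacianpoly}. By definition, the algebraic connectivity $a(\Gamma(V))$ is the second smallest eigenvalue of the Laplacian matrix $L(\Gamma(V))$, so the only task is to sort the roots of the characteristic polynomial
\[
\Theta(L(\Gamma(V)),x)=x(x-q^n)(x-1)^{q^{n-1}+\cdots+q}(x-q)^{(q-2)(q^{n-1}+\cdots+1)}
\]
and pick out the second smallest one.

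First I would record the distinct eigenvalues together with their multiplicities: $0$ with multiplicity $1$; $1$ with multiplicity $q^{n-1}+\cdots+q$; $q$ with multiplicity $(q-2)(q^{n-1}+\cdots+1)$; and $q^n$ with multiplicity $1$. The factor $x$ to the first power shows that $0$ is a simple eigenvalue (consistent with $\Gamma(V)$ being connected, as established in Section 3), and since $L(\Gamma(V))$ is positive semidefinite, $0$ is the smallest eigenvalue. Next I would check that $1$ actually appears in the spectrum, i.e. that its multiplicity $q^{n-1}+\cdots+q$ is at least $1$; this holds whenever $n\ge 2$ (when $n=1$ the graph is the complete graph $K_q$ and the matter is trivial). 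Finally, using $q\ge 2$ one has $0<1<q\le q^n$, with strict inequality $q<q^n$ when $n\ge 2$, so among the eigenvalues strictly greater than $0$ the smallest is $1$. Hence the second smallest eigenvalue of $L(\Gamma(V))$ is $1$, that is, $a(\Gamma(V))=1$.

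The argument involves essentially no obstacle, since Theorem \ref{laplacianpoly} already carries all the content; the only points requiring a little care are verifying that the eigenvalue $1$ occurs with positive multiplicity (so that it is genuinely present in the spectrum) and checking the ordering $0<1<q\le q^n$ from $q\ge 2$ and $n\ge 2$. With these in hand the corollary follows at once.
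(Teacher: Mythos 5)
Your proposal is correct and matches the paper exactly: the paper likewise derives the corollary immediately from Theorem \ref{laplacianpoly} by reading off the Laplacian spectrum and identifying the second smallest eigenvalue. Your extra care in checking that the eigenvalue $1$ has positive multiplicity (which requires $n\ge 2$; for $n=1$ the graph is $K_q$ and the algebraic connectivity is actually $q$) is a worthwhile refinement that the paper omits.
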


If $\lambda_1\geq \lambda_2 \geq \cdots \geq \lambda_n=0$ are the Laplacian eigen values of the graph $\Gamma$ of n vertices, then the number of spanning trees of $\Gamma$, denoted by $\tau(\Gamma)$, is $\frac{\lambda_1\lambda_2 \cdots \lambda_{n-1}}{n}$ [Theorem 4.11; \cite{Bapat}]. Thus from Theorem \ref{laplacianpoly}, we get
\begin{corollary}
The number of spanning trees of $\Gamma(V)$ is $q^{(q-2)(q^{n-1}+\cdots +1)}$.
\end{corollary}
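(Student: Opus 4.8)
The plan is to read off the Laplacian spectrum from Theorem \ref{laplacianpoly} and substitute it directly into the Matrix--Tree type formula $\tau(\Gamma)=\frac{1}{n}\,\lambda_1\lambda_2\cdots\lambda_{n-1}$ quoted from \cite{Bapat}. First I would record, from the factorization
$\Theta(L(\Gamma(V)),x)=x(x-q^n)(x-1)^{q^{n-1}+\cdots+q}(x-q)^{(q-2)(q^{n-1}+\cdots+1)}$,
that the Laplacian eigenvalues of $\Gamma(V)$ are $0$ (simple), $q^n$ (simple), $1$ with multiplicity $q^{n-1}+\cdots+q$, and $q$ with multiplicity $(q-2)(q^{n-1}+\cdots+1)$, and that $\Gamma(V)$ has $n=|V|=q^n$ vertices.

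As a preliminary sanity check I would confirm that these multiplicities sum to $q^n$, so that the displayed polynomial really is the full characteristic polynomial and $0$ is the unique zero eigenvalue (which is what makes the cited formula applicable): setting $S=q^{n-1}+\cdots+q+1=\tfrac{q^n-1}{q-1}$, the total multiplicity is $1+1+(S-1)+(q-2)S=1+(q-1)S=1+(q^n-1)=q^n$, as required.

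Then the computation is one line. The product of the $q^n-1$ nonzero Laplacian eigenvalues equals $q^n\cdot 1^{\,q^{n-1}+\cdots+q}\cdot q^{\,(q-2)(q^{n-1}+\cdots+1)}$; dividing by $n=q^n$ cancels the factor $q^n$ contributed by the simple eigenvalue $q^n$, the eigenvalue $1$ contributes nothing, and one is left with $\tau(\Gamma(V))=q^{(q-2)(q^{n-1}+\cdots+1)}$, which is exactly the claimed value.

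There is essentially no obstacle here: the only point that needs a moment's attention is the bookkeeping of multiplicities and noticing that the simple eigenvalue $q^n$ (not a power of the eigenvalue $q$) is the one absorbed by the denominator; beyond that, the corollary is an immediate substitution into Theorem \ref{laplacianpoly} together with the spanning-tree formula of \cite{Bapat}.
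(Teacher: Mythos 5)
Your proposal is correct and matches the paper's (implicit) argument exactly: the corollary is obtained by substituting the Laplacian spectrum from Theorem \ref{laplacianpoly} into the spanning-tree formula $\tau(\Gamma)=\frac{\lambda_1\cdots\lambda_{n-1}}{n}$ from \cite{Bapat}. Your multiplicity check and the observation that the simple eigenvalue $q^n$ is cancelled by the denominator are exactly the right bookkeeping.
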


Recently Gutman et. al \cite{GZ} have defined the Laplacian energy of a graph $\Gamma$ with n vertices and $m$ edges as: $LE(\Gamma)=\sum^n_{i=1}|\lambda_i-\frac{2m}{n}|$, where $\lambda_i$ are the Laplacian eigen values of the graph $\Gamma$. From Theorem \ref{size} and Theorem \ref{laplacianpoly}, we have:

\begin{corollary}
The Laplacian energy of the graph $\Gamma(V)$ is $q^n+(\frac{q^n(q-1)-q}{q^n})(q^{n-1}+\cdots +q)+(\frac{q}{q^n})(q-2)(q^{n-1}+ \cdots +1)$.
\end{corollary}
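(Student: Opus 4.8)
The plan is to substitute directly into Gutman's formula. Writing $N=q^{n}$ for the number of vertices of $\Gamma(V)$, the Laplacian energy is $LE(\Gamma(V))=\sum_{i=1}^{N}\bigl|\lambda_i-\tfrac{2m}{N}\bigr|$, where the $\lambda_i$ run over the Laplacian spectrum. Theorem \ref{size} gives $2m=q(q^{n}-1)$, so the common shift is $\tfrac{2m}{N}=\tfrac{q(q^{n}-1)}{q^{n}}=q-q^{1-n}$. Theorem \ref{laplacianpoly} supplies the Laplacian eigenvalues explicitly: $0$ with multiplicity $1$, $q^{n}$ with multiplicity $1$, $1$ with multiplicity $q^{n-1}+\cdots+q$, and $q$ with multiplicity $(q-2)(q^{n-1}+\cdots+1)$; one checks that these multiplicities sum to $q^{n}$ using $1+(q-1)(q^{n-1}+\cdots+1)=q^{n}$.

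The one step that is not purely mechanical is resolving the four absolute values, i.e.\ locating $\tfrac{2m}{N}=q-q^{1-n}$ among $0,1,q,q^{n}$. Since $q\ge 2$ and $n\ge 1$ we have $0<q^{1-n}\le 1$, hence $1\le q-q^{1-n}<q\le q^{n}$; the borderline case $n=1$ is harmless because then the eigenvalue $1$ occurs with multiplicity $q^{n-1}+\cdots+q=0$. Therefore $\bigl|0-\tfrac{2m}{N}\bigr|=q-q^{1-n}$, $\bigl|q^{n}-\tfrac{2m}{N}\bigr|=q^{n}-q+q^{1-n}$, $\bigl|1-\tfrac{2m}{N}\bigr|=(q-1)-q^{1-n}=\tfrac{q^{n}(q-1)-q}{q^{n}}$, and $\bigl|q-\tfrac{2m}{N}\bigr|=q^{1-n}=\tfrac{q}{q^{n}}$.

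It then remains to add these up with the right multiplicities. The contributions of the two simple eigenvalues $0$ and $q^{n}$ combine, with the $\pm q^{1-n}$ and $\mp q$ terms cancelling, to give $(q-q^{1-n})+(q^{n}-q+q^{1-n})=q^{n}$; the eigenvalue $1$ contributes $(q^{n-1}+\cdots+q)\cdot\tfrac{q^{n}(q-1)-q}{q^{n}}$; and the eigenvalue $q$ contributes $(q-2)(q^{n-1}+\cdots+1)\cdot\tfrac{q}{q^{n}}$. Summing these three quantities yields precisely the claimed value of $LE(\Gamma(V))$. No part of this should pose a real difficulty once Theorems \ref{size} and \ref{laplacianpoly} are in hand; the only genuine care needed is the sign analysis in the second paragraph together with the cancellation that collapses the $0$ and $q^{n}$ contributions into the single term $q^{n}$.
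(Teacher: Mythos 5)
Your proposal is correct and follows exactly the route the paper intends: the paper gives no written proof beyond citing Theorem \ref{size} and Theorem \ref{laplacianpoly} and substituting into the Gutman--Zhou formula, which is precisely what you carry out (with the welcome extra care of verifying that $1\le \frac{2m}{q^n}<q$ so the absolute values resolve as claimed).
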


\begin{theorem}              \label{distancepoly}
The characteristic polynomial of the distance matrix of $\Gamma(V)$ is
\begin{center}
$\Theta (\mathbb{D}(\Gamma(V)),x)=[x^2-\{2(q^n-1)-q\}x-(q^n-1)](x+q)^{q^{n-1}+ \cdots +q}(x+1)^{(q-2)(q^{n-1}+ \cdots +1)}$.
\end{center}
\end{theorem}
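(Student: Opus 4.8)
The plan is to diagonalize $\mathbb{D}(\Gamma(V))$ directly, by producing an explicit eigenbasis, and to do so I would first record the identity $\mathbb{D}(\Gamma(V)) = 2\bigl(J_{q^n} - I_{q^n}\bigr) - A(\Gamma(V))$, where $J_{q^n}$ is the all-ones matrix. This is checked entrywise: the diagonal of the right side is $2(1-1)-0=0$, and an off-diagonal entry equals $2 - A_{uv}$, which is $1$ exactly when $u,v$ lie in a common $1$-dimensional subspace and $2$ otherwise — precisely the distances. Consequently, if $w$ is orthogonal to the all-ones vector $\mathbf{1}$ then $J_{q^n}w = 0$, so $\mathbb{D}(\Gamma(V))w = -\bigl(A(\Gamma(V)) + 2I\bigr)w$; hence every eigenvector of $A(\Gamma(V))$ lying in $\mathbf{1}^{\perp}$ with eigenvalue $\mu$ is an eigenvector of $\mathbb{D}(\Gamma(V))$ with eigenvalue $-(\mu+2)$.

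Next I would exhibit the required eigenvectors of $A(\Gamma(V))$ that are visibly orthogonal to $\mathbf 1$. Label the $q^{n-1}+\cdots+1$ one-dimensional subspaces $B_1,B_2,\dots$, each contributing $q-1$ nonzero vectors, and let $\mathbf 1_{B_i}$ denote the indicator vector of $B_i$ (value $1$ on its nonzero vectors, $0$ elsewhere), and $e_\theta$ the standard basis vector at $\theta$. (i) For a fixed block $B_i$, any vector supported on $B_i$ whose coordinates sum to $0$ is an $A(\Gamma(V))$-eigenvector with eigenvalue $-1$, since its value at a vertex $u \in B_i$ is (block-sum)$\,-\,w_u=-w_u$; this yields $q-2$ independent such vectors per block, all in $\mathbf 1^{\perp}$, hence an eigenvalue $-(-1+2)=-1$ of $\mathbb{D}(\Gamma(V))$ of multiplicity $(q-2)(q^{n-1}+\cdots+1)$. (ii) For $i\neq j$ the difference $\mathbf 1_{B_i}-\mathbf 1_{B_j}$ is an $A(\Gamma(V))$-eigenvector with eigenvalue $q-2$ (the contributions of $\theta$ cancel), and these span an $\bigl(q^{n-1}+\cdots+q\bigr)$-dimensional subspace of $\mathbf 1^{\perp}$, giving an eigenvalue $-(q-2+2)=-q$ of $\mathbb{D}(\Gamma(V))$ of that multiplicity.

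Finally I would deal with the remaining $2$-dimensional subspace $S=\mathrm{span}\{e_\theta,\ \mathbf 1 - e_\theta\}$, which complements the span of the families in (i) and (ii): those two families span subspaces that lie in $\{v: v_\theta = 0\}$ and are easily seen to be independent (one consists of block-summing-to-zero vectors, the other of block-constant vectors), with dimensions totalling $(q-2)(q^{n-1}+\cdots+1)+(q^{n-1}+\cdots+q)+2 = q^n$ once $S$ is adjoined. A direct computation of the two row-sum patterns of $\mathbb{D}(\Gamma(V))$ shows $S$ is $\mathbb{D}(\Gamma(V))$-invariant, with $\mathbb{D}(\Gamma(V))$ acting on the basis $(e_\theta,\ \mathbf 1 - e_\theta)$ by $\left(\begin{smallmatrix} 0 & q^n-1 \\ 1 & 2(q^n-1)-q \end{smallmatrix}\right)$, whose characteristic polynomial is $x^2-\{2(q^n-1)-q\}x-(q^n-1)$. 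Multiplying this quadratic by $(x+q)^{q^{n-1}+\cdots+q}$ and $(x+1)^{(q-2)(q^{n-1}+\cdots+1)}$ from (i) and (ii) gives the stated $\Theta(\mathbb{D}(\Gamma(V)),x)$.

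The only genuine work is bookkeeping: confirming the three eigenvector families are linearly independent and account for all $q^n$ dimensions, and verifying the $2\times 2$ block. Everything else is immediate from $\mathbb{D}(\Gamma(V)) = 2(J-I) - A(\Gamma(V))$. One could instead mimic the determinant row-reductions of Theorems \ref{adjacencypoly} and \ref{laplacianpoly}, but there the between-block entries were $0$, whereas in $xI-\mathbb{D}(\Gamma(V))$ they are $-2$; after clearing the $\theta$-row one is left with a determinant that is not block diagonal, so the eigenvector argument above is the cleaner route.
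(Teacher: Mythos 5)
Your proof is correct, and it takes a genuinely different route from the paper's. The paper computes $\det(xI-\mathbb{D}(\Gamma(V)))$ directly by determinant manipulations: the column operations $C_i'=C_i-2C_1$ clear the between-block entries $-2$ (so your worry that the reduction ``is not block diagonal'' is in fact handled there), after which rows are multiplied by linear factors, summed, and expanded repeatedly until the determinant collapses to the stated product. Your argument instead diagonalizes $\mathbb{D}(\Gamma(V))$ via the identity $\mathbb{D}(\Gamma(V))=2(J-I)-A(\Gamma(V))$, valid because the graph has diameter $2$, and exhibits an explicit eigenbasis: block-sum-zero vectors (eigenvalue $-1$ with multiplicity $(q-2)(q^{n-1}+\cdots+1)$), differences of block indicators (eigenvalue $-q$ with multiplicity $q^{n-1}+\cdots+q$), and the invariant plane $\mathrm{span}\{e_\theta,\mathbf 1-e_\theta\}$ carrying the quadratic factor; I checked the $2\times 2$ block, the eigenvalue computations, and the dimension count $(q-2)N+(N-1)+2=q^n$ with $N=q^{n-1}+\cdots+1$, and all are right. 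What your approach buys: it avoids the paper's somewhat delicate polynomial row-multiplications and divisions (which strictly speaking need to be justified over the field of rational functions), it produces the eigenvectors themselves rather than only the eigenvalues, and it makes transparent why the distance spectrum is an affine shift of the adjacency spectrum on $\mathbf 1^{\perp}$ --- indeed the same eigenbasis reproves Theorem \ref{adjacencypoly} and, with $L=D-A$, Theorem \ref{laplacianpoly}. What the paper's approach buys is self-containedness at the level of elementary determinant identities, with no need to verify completeness of an eigenbasis.
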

\begin{proof}
The characteristic polynomial of $\mathbb{D}(\Gamma(V))$ is
\begin{center}
$\Theta(\mathbb{D}(\Gamma(V)),x)=
\begin{array}{|cccccccccc|}
x &-1 &-1 &\cdots &-1 &\cdots &-1 &-1 &\cdots &-1\\
-1 &x & -1 &\cdots &-1 &\cdots &-2 &-2 &\cdots &-2\\
-1 &-1 & x & \cdots &-1 &\cdots &-2 &-2& \cdots &-2\\
\vdots &\vdots &\vdots &\ddots  &\vdots &\vdots &\vdots & \vdots &\ddots &\vdots\\
-1 &-1 &-1 & \cdots &x &\cdots &-2 &-2 & \cdots &-2\\
\vdots &\vdots &\vdots &\vdots &\vdots &\ddots &\vdots & \vdots &\vdots &\vdots\\
-1 &-2 &-2 & \cdots &-2 &\cdots &x &-1 & \cdots &-1\\
-1 &-2 &-2 & \cdots &-2 &\cdots &-1  &x &\cdots &-1\\
\vdots &\vdots &\vdots &\ddots  &\vdots &\vdots &\vdots &\vdots &\ddots &\vdots\\
-1 &-2 &-2 & \cdots &-2 &\cdots &-1 &-1 & \cdots &x
\end{array}
$
\end{center}

Apply the successive column operations $C_i'=C_i-2C_1$ for $i=2,3, \cdots ,q^n$; we get

\begin{align*}
\Theta(\mathbb{D}(\Gamma(V)),x) \; =
& \; \begin{array}{|cccccccc|}
x &-1-2x &\cdots &-1-2x  &\cdots &-1-2x &\cdots &-1-2x\\
-1 &x+2  &\cdots &1 &\cdots &0 & \cdots &0\\
\vdots &\vdots  &\ddots  &\vdots &\vdots &\vdots &\ddots &\vdots\\
-1 &1  &\cdots &x+2  &\cdots &0 & \cdots &0\\
\vdots &\vdots &\vdots  &\vdots &\ddots & \vdots &\vdots &\vdots\\
-1 &0 & \cdots &0 &\cdots &x+2 & \cdots &1\\
\vdots &\vdots &\ddots  &\vdots &\vdots &\vdots  &\ddots &\vdots\\
-1 &0 & \cdots &0  &\cdots &1 & \cdots &x+2
\end{array}  
\\
=& \; (1+2x)\cdot
\begin{array}{|cccccccc|}
\frac{x}{1+2x} &-1 &\cdots &-1  &\cdots &-1 &\cdots &-1\\
-1 &x+2  &\cdots &1 &\cdots &0 & \cdots &0\\
\vdots &\vdots  &\ddots  &\vdots &\vdots &\vdots &\ddots &\vdots\\
-1 &1  &\cdots &x+2  &\cdots &0 & \cdots &0\\
\vdots &\vdots &\vdots  &\vdots &\ddots & \vdots &\vdots &\vdots\\
-1 &0 & \cdots &0 &\cdots &x+2 & \cdots &1\\
\vdots &\vdots &\ddots  &\vdots &\vdots &\vdots  &\vdots &\vdots\\
-1 &0 & \cdots &0  &\cdots &1 & \cdots &x+2
\end{array}
\end{align*}

Multiply the 1 st row by $x+q$ and then apply the row operation $R_1'=R_1+ \cdots +R_{q^n}$, we get 
\begin{center}
$\Theta(\mathbb{D}(\Gamma(V)),x)=
\frac{[x^2-\{2(q^n-1)-q\}x-(q^n-1)]}{(x+q)}\cdot 
\begin{array}{|cccc|}
x+2 & 1 &\cdots &1 \\
1 & x+2 & \cdots &1 \\
\vdots &\vdots &\ddots  &\vdots\\
1 &1 & \cdots &x+2
\end{array}^{q^{n-1}+q^{n-2}+\cdots +q+1}_{(q-1)\times (q-1)}$
\end{center}

Let 
\begin{equation}     \label{distancity}
D_1=
\begin{array}{|cccc|}
x+2 & 1 &\cdots &1 \\
1 & x+2 & \cdots &1 \\
\vdots &\vdots &\ddots  &\vdots\\
1 &1 & \cdots &x+2
\end{array}_{(q-1)\times (q-1)}
\end{equation}
\\Multiply the first row of \ref{distancity}, by $(x+(q-1))$ and apply the row operation $R_1'=R_1-R_2- \cdots -R_{q-1}$. Then expanding the determinant in terms of the first row, we get
\begin{center}
$D_1=\frac{(x+q)(x+1)}{(x+(q-1))}\cdot 
\begin{array}{|cccc|}
x+2 & 1 &\cdots &1 \\
1 & x+2 & \cdots &1 \\
\vdots &\vdots &\ddots  &\vdots\\
1 &1 & \cdots &x+2
\end{array}_{(q-2)\times (q-2)}$
\end{center}

Again multiply the 1 st row by $(x+(q-2))$ and then apply the row operation $R_1'=R_1-R_2-\cdots -R_{q-2}$. Then expanding in terms of 1 st row, we get
\begin{center}
$D_1=\frac{(x+q)(x+1)^2}{(x+(q-2))}\cdot 
\begin{array}{|cccc|}
x+2 & 1 &\cdots &1 \\
1 & x+2 & \cdots &1 \\
\vdots &\vdots &\ddots  &\vdots\\
1 &1 & \cdots &x+2
\end{array}_{(q-3)\times (q-3)}$
\end{center}

Continuing in this way, we get $D_1=(x+q)(x+1)^{q-2}$ and so, 
\begin{align*}
\Theta(\mathbb{D}(\Gamma(V)),x)=&\frac{x^2-\{2(q^n-1)-q\}x-(q^n-1)}{(x+q)}\{(x+q)(x+1)^{q-2})^{(q^{n-1}+ \cdots +1)}\\
=&[x^2-\{2(q^n-1)-q\}x-(q^n-1)](x+q)^{q^{n-1}+ \cdots +q} (x+1)^{(q-2)(q^{n-1}+ \cdots +1)}.
\end{align*}
\end{proof}

Recently Indulal, Gutman and Vijayakumar \cite{IGV} have defined the distance energy of a graph $\Gamma$ as: $E_{\mathbb{D}}(\Gamma)=\sum^n_{i=1}|\lambda_i|$, where $\lambda_i$ are the distance eigen values of the graph $\Gamma$. From Theorem \ref{distancepoly}, we have:

\begin{corollary}
The distance  energy of the graph $\Gamma(V)$ is $2(2q^n-q-2)$.
\end{corollary}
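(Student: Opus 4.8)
The plan is to read the distance spectrum of $\Gamma(V)$ off Theorem \ref{distancepoly} and then sum the absolute values of the eigenvalues. From the factorisation of $\Theta(\mathbb{D}(\Gamma(V)),x)$, the distance eigenvalues of $\Gamma(V)$ are the two roots $\mu_1,\mu_2$ of the quadratic $x^2-\{2(q^n-1)-q\}x-(q^n-1)$, together with $-q$ with multiplicity $q^{n-1}+\cdots+q$ and $-1$ with multiplicity $(q-2)(q^{n-1}+\cdots+1)$. Hence
\[
E_{\mathbb{D}}(\Gamma(V))=|\mu_1|+|\mu_2|+q\,(q^{n-1}+\cdots+q)+(q-2)(q^{n-1}+\cdots+1).
\]

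First I would collapse the last two terms. Writing $T=q^{n-1}+\cdots+1=\tfrac{q^n-1}{q-1}$, so that $q^{n-1}+\cdots+q=T-1$, one gets
\[
q\,(T-1)+(q-2)\,T=2(q-1)T-q=2(q^n-1)-q=2q^n-q-2,
\]
so the eigenvalues $-q$ and $-1$ already account for $2q^n-q-2$ of the distance energy. For the remaining part I would use Vieta's formulas for the quadratic: its roots satisfy $\mu_1+\mu_2=2(q^n-1)-q=2q^n-q-2$ and $\mu_1\mu_2=-(q^n-1)$. Inserting the value of $|\mu_1|+|\mu_2|$ into the display above will then give $E_{\mathbb{D}}(\Gamma(V))=2(2q^n-q-2)$.

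The step that needs the most care --- and which I expect to be the main obstacle --- is the evaluation of $|\mu_1|+|\mu_2|$. Since $\mu_1\mu_2=-(q^n-1)<0$, the two roots have opposite signs, so in principle $|\mu_1|+|\mu_2|=|\mu_1-\mu_2|=\sqrt{(\mu_1+\mu_2)^2-4\mu_1\mu_2}$ rather than $\mu_1+\mu_2$, and one must argue that the net contribution of this root pair to $E_{\mathbb{D}}(\Gamma(V))$ is exactly $\mu_1+\mu_2=2q^n-q-2$. Once that is pinned down the corollary follows by adding the two halves, the only other computation being the elementary geometric-series identity established in the middle paragraph.
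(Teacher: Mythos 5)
Your reduction is set up correctly, and the part you carry out is right: with $T=\tfrac{q^n-1}{q-1}$ the eigenvalues $-q$ (multiplicity $T-1$) and $-1$ (multiplicity $(q-2)T$) contribute $q(T-1)+(q-2)T=2(q-1)T-q=2q^n-q-2$ to the distance energy, and Vieta gives $\mu_1+\mu_2=2q^n-q-2$, $\mu_1\mu_2=-(q^n-1)$ for the quadratic factor. But the step you flag as "the main obstacle" is not an obstacle you failed to clear --- it is a genuine contradiction, and it shows the corollary as stated is false. Since $\mu_1\mu_2=-(q^n-1)<0$, one root is negative, so with $s=2q^n-q-2$ you have
\[
|\mu_1|+|\mu_2|=\sqrt{s^2+4(q^n-1)}\;>\;s\qquad(\text{strictly, as }q^n>1),
\]
and hence the true distance energy is $s+\sqrt{s^2+4(q^n-1)}$, which is strictly larger than $2s=2(2q^n-q-2)$. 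There is no argument that makes "the net contribution of this root pair" equal to $\mu_1+\mu_2$; that would require both roots to be nonnegative, which the negative product forbids. A concrete check: for $q=2$, $n=2$ the graph is the star $K_{1,3}$, whose distance spectrum is $\{2+\sqrt{7},\,2-\sqrt{7},\,-2,\,-2\}$, giving distance energy $4+2\sqrt{7}\approx 9.29$, whereas the corollary claims $2(2\cdot 4-2-2)=8$. The paper gives no proof of this corollary (it is asserted directly from Theorem \ref{distancepoly}), and the discrepancy is exactly the one you isolated: the authors implicitly replaced $|\mu_1|+|\mu_2|$ by $\mu_1+\mu_2$. The correct statement is that the distance energy equals $(2q^n-q-2)+\sqrt{(2q^n-q-2)^2+4(q^n-1)}$.
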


\bibliographystyle{amsplain}

\begin{thebibliography}{10}
\baselineskip 5mm

\bibitem{Bapat}
R. B. Bapat: Graphs and matrices. Second edition, Hindustan Book Agency. 2014.

\bibitem{Beck}
I. Beck: Coloring of commutative rings. Journal of Algebra. 116 (1988), 208-226.

\bibitem{BS}
A. K. Bhuniya and S. Bera: On some characterizations of strong power graphs of finite groups. Spec. Matrices. 4 (2016), 121-129.

\bibitem{CS}
P. J. Cameron and S. Ghosh: The power graph of a finite group. Discrete mathematics. 311(13) (2011), 1220-122.

\bibitem{CGS}
I. Chakrabarty, S. Ghosh and M. K. Sen: Undirected power graph of semigroups. Semigroup Forum. 78 (2009), 410-426.

\bibitem{CGMS}
I. Chakrabarty, S. Ghosh, T. K. Mukherjee and M. K. Sen: Intersection graphs of ideals of rings. Discrete mathematics 309. 17 (2009), 5381-5392.
 
\bibitem{Das}
A. Das: Non-Zero component graph of a finite dimensional vector spaces. Communications in Algebra. 44 (2016), 3918-3926.

\bibitem{Das1}
A. Das: Non-Zero component union graph of a finite dimensional vector space. Linear and Multilinear Algebra. DOI: 10.1080/03081087.2016.1234577.

\bibitem{Das2}
A. Das: Subspace inclusion graph of a vector space. Communications in Algebra. 44 (2016), 4724-4731.

\bibitem{Das3}
A. Das: On non-zero component graph of vector spaces over finite fields. J. Algebra Appl. 16(1) (2017), DOI: 10.1142/S0219498817500074.

\bibitem{Fiedler}
M. Fiedler:  Algebraic connectivity of graphs. Czechoslovak Math. J. 23 (1973), 298-305.

\bibitem{GZ}
I. Gutman I. and B. Zhou: Laplacian energy of a graph. Linear Algebra Appl. 414 (2006), 29-37.

\bibitem{IGV}
G. Indulal, I. Gutman and A. Vijayakumar: On distance energy of graphs. MATCH Commun. Mathe. Comput. Chem. 60 (2008), 461-472.

\bibitem{PL}
J. Plensik: Critical graphs of given diameter. Acta Fac. Rerum Natur. Univ. Comenian. Math. 30 (1975), 71-93.


\end{thebibliography}

\end{document}